\newtheorem{theorem}{Theorem}[section]
\newtheorem{lemma}[theorem]{Lemma}
\newtheorem{prop}[theorem]{Proposition}
\newtheorem{remark}[theorem]{Remark}
\newtheorem{ass}[theorem]{Assumption}
\def\RR{\mathbb R}
\def\EE{\mathsf E}
\def\PP{\mathsf P}
\def\CC{\mathcal C}
\def\DD{\mathcal D}
\def\cF{{\cal F}}
\def\cS{{\cal S}}
\def\bl{\textcolor{blue}}
\newcommand{\LL}{\mathbb{L}_X}
\newcommand{\R}{\mathbb{R}}
\newcommand{\beq}{\begin{equation}}
\newcommand{\eeq}{\end{equation}}
\newcommand{\bea}{\begin{eqnarray}}
\newcommand{\eea}{\end{eqnarray}}
\newcommand{\beas}{\begin{eqnarray*}}
\newcommand{\eeas}{\end{eqnarray*}}
\def\theequation{\arabic{section}.\arabic{equation}}
\begin{document}

\title{{A Non Convex Singular Stochastic Control Problem \\ and its Related Optimal Stopping Boundaries}\footnote{The first and the third authors were supported by EPSRC grant EP/K00557X/1; financial support by the German Research Foundation (DFG) via grant Ri--1128--4--1 is gratefully acknowledged by the second author.}}

\author{Tiziano De Angelis\thanks{School of Mathematics, The University of Manchester, Oxford Road, Manchester M13 9PL, United Kingdom; \texttt{tiziano.deangelis@manchester.ac.uk}}\:\:\:\:Giorgio Ferrari\thanks{Center for Mathematical Economics, Bielefeld University, Universit\"atsstrasse 25, D-33615 Bielefeld, Germany; \texttt{giorgio.ferrari@uni-bielefeld.de}}\:\:\:\:John Moriarty\thanks{School of Mathematics, The University of Manchester, Oxford Road, Manchester M13 9PL, United Kingdom; \texttt{john.moriarty@manchester.ac.uk}}}

\date{\today}
\maketitle

\textbf{Abstract.}
Equivalences are known between problems of singular stochastic control (SSC) with convex performance criteria and related questions of optimal stopping, see for example Karatzas and Shreve [SIAM J.~Control Optim.~$22$ (1984)].  
The aim of this paper is to investigate how far connections of this type generalise to a non convex problem of purchasing electricity. Where the classical equivalence breaks down we provide alternative connections to optimal stopping problems.

We consider a non convex infinite time horizon SSC problem whose state consists of an uncontrolled diffusion representing a real-valued commodity price, and a controlled increasing bounded process representing an inventory.  We analyse the geometry of the \emph{action} and \emph{inaction} regions by characterising their (optimal) boundaries. Unlike the case of convex SSC problems we find that the optimal boundaries may be both \emph{reflecting} and \emph{repelling} and it is natural to interpret the problem as one of SSC with discretionary stopping.

\smallskip

{\textbf{Keywords}}: finite-fuel singular stochastic control; optimal stopping; free-boundary; smooth-fit; Hamilton-Jacobi-Bellmann equation; irreversible investment.

\smallskip

{\textbf{MSC2010 subsject classification}}: 91B70, 93E20, 60G40, 49L20.

\smallskip



\section{Introduction and Problem Formulation}
\label{Introduction}

It is well known that convexity of the performance criterion suffices to link certain singular stochastic control problems to related problems of optimal stopping (cf.\ \cite{ElKK88}, \cite{KaratzasShreve84} and \cite{K85}, among others). In this paper we establish multiple connections with optimal stopping for a non convex, infinite time-horizon, two-dimensional, degenerate singular stochastic control problem motivated by a problem of purchasing electricity. The non convexity arises because our electricity price model allows for both positive and negative prices.

We model the purchase of electricity over time at a stochastic real-valued spot price $(X_t)_{t \geq 0}$ for the purpose of storage in a battery (for example, the battery of an electric vehicle). The battery must be full at a random terminal time, any deficit being met by a less efficient charging method. This feature is captured by inclusion of a terminal cost term equal to the product of the terminal spot price and a convex function $\Phi$ of the undersupply. Under the assumption of a random terminal time independent of $X$ and exponentially distributed, we show in Appendix \ref{app:formulation} that this optimisation problem is equivalent to solving the following problem.

Letting $\lambda>0$ and $c \in [0,1]$ be constants, $\{\nu: \nu \in \cS_c\}$ a set of bounded increasing controls, $(X^x_t)_{t\ge0}$ a continuous strong Markov process starting from $x\in\RR$ at time zero and $C^{c,\nu}_t$ a process representing the level of storage at time $t$:
\beq
\label{ControlledY}
C^{c,\nu}_t= c + \nu_t, \quad t \geq 0,
\eeq
the problem is to find
\beq
\label{valuefunction}
U(x,c):=\inf_{\nu \in \mathcal{S}_c} \mathcal{J}_{x,c}(\nu),
\eeq
with
\beq
\label{nonconvex}
\mathcal{J}_{x,c}(\nu):=\EE\bigg[\int_0^{\infty}e^{-\lambda s}\lambda X^x_{s}\Phi(
C^{c,\nu}_{s})ds + \int^{\infty}_0 {e^{-\lambda s}X^x_s\,d{\nu}_s} \bigg],
\eeq
and the minimising control policy $\nu^*$.
It is notable that the integrands in \eqref{nonconvex} may assume both positive and negative values: economically this corresponds to the possibility that the price is negative prior to or at the random terminal time in the original optimisation problem discussed in Appendix \ref{app:formulation}.

In common with other commodity prices, 
the standard approach in the literature is to model electricity prices through a geometric or arithmetic mean reverting process (see, e.g., \cite{GR} or \cite{LS} and references therein). Motivated by deregulated electricity markets with renewable generation, in which periods of negative electricity prices have been observed due to the requirement to balance real-time supply and demand, we assume an arithmetic model. We assume that $X$ follows a standard time-homogeneous Ornstein-Uhlenbeck process\footnote{See Appendix \ref{factsOU} for general facts on the Ornstein-Uhlenbeck process.} with positive volatility $\sigma$, positive adjustment rate $\theta$ and positive asymptotic (or equilibrium) value $\mu$. On a complete probability space $(\Omega,\cF,\PP)$, with $\mathbb{F}:=(\cF_t)_{t\ge0}$ the filtration generated by a one-dimensional standard Brownian motion $(B_t)_{t\ge0}$ and augmented by $\PP$-null sets, we therefore take $X^x$ as the unique strong solution of
\beq
\label{OU}
\left\{
\begin{array}{ll}
dX^x_t= \theta(\mu-X_t^x)dt + \sigma dB_t, \quad t>0,\\
X^x_0=x\in\RR.
\end{array}
\right.
\eeq

We assume that the electricity storage capacity is bounded above by 1 (this resembles a so-called {\em finite-fuel} constraint, see for example \cite{ElKK88}): for any initial level $c \in [0,1]$ the set of admissible controls is
\begin{eqnarray}
\label{admissiblecontrols}
\mathcal{S}_c \hspace{-0.2cm}&:=&\hspace{-0.2cm} \{\nu:\Omega \times \mathbb{R}_{+} \mapsto  \mathbb{R}_{+}, (\nu_{t}(\omega))_{t \geq 0}\mbox{ is nondecreasing,\,\,left-continuous, adapted} \\
&& \hspace{5cm} \mbox{ with} \,\,c + \nu_t\leq 1\,\,\,\forall t \geq 0, \nonumber \,\,\nu_0=0\,\,\,\,\PP-\mbox{a.s.}\},
\end{eqnarray}
and $\nu_t$ represents the cumulative amount of energy purchased up to time $t$.
From now on we make the following standing assumption on the running cost function $\Phi$.
\begin{ass}
\label{ass-Phi}
$\Phi: \mathbb{R} \mapsto \mathbb{R}_+$ lies in $C^2(\mathbb{R})$ and is decreasing and strictly convex with $\Phi(1)=0$.
\end{ass}
\noindent We note that we do not cover with Assumption \ref{ass-Phi} the case of a linear running cost function, although the solution in the linear case is simpler and follows immediately from the results contained in Sections \ref{Case2} and \ref{Case3} below.

With these specifications problem \eqref{valuefunction} shares common features with the class of finite-fuel, singular stochastic control problems of monotone follower type (see, e.g., \cite{BSW80}, \cite{CMR85}, \cite{ElKK88}, \cite{ElKK91}, \cite{K85} and \cite{KaratzasShreve86} as classical references on finite-fuel monotone follower problems). 
Such problems, with finite or infinite fuel and a running cost (profit) which is convex (concave) in the control variable, have been well studied for over 30 years (see, e.g., \cite{Alvarez99}, \cite{Alvarez01}, \cite{Bank}, \cite{CH94}, \cite{ElKK88}, \cite{ElKK91}, \cite{K83}, \cite{KaratzasShreve84}, \cite{K85} and \cite{KaratzasShreve86}, among many others). Remarkably it turns out that convexity (or concavity), together with other more technical conditions, is sufficient to prove that such singular stochastic control problems are equivalent to related problems of optimal stopping; moreover the optimally controlled state process is the solution of a Skorokhod reflection problem at the free-boundary of the latter (see, e.g., \cite{CH94}, \cite{ElKK88}, \cite{KaratzasShreve84}, \cite{K85} and \cite{KaratzasShreve86}).

In our case the weighting function $\Phi$ appearing in the running cost is strictly convex, the marginal cost $e^{-\lambda s}X^x_s\,d{\nu}_s$ of exercising control is linear in the control variable, and the set of admissible controls $\mathcal{S}_c$ (cf.\ \eqref{admissiblecontrols}) is convex. However the Ornstein-Uhlenbeck process $X^x$ of \eqref{OU} can assume negative values with positive probability and is also a factor of the running cost so that the total expected cost functional \eqref{nonconvex} is not convex in the control variable. Therefore the connection between singular stochastic control and optimal stopping as addressed in \cite{ElKK88}, \cite{KaratzasShreve84} and \cite{K85}, among others, is no longer guaranteed for problem (\ref{valuefunction}).

The optimisation problem we study (in common with many others in the literature, see for instance \cite{FedericoPham}, \cite{Ferrari2012}, \cite{Kobila}, \cite{MehriZervos} or \cite{RS}) has two state variables, one which is diffusive and the other which is a control process, a setup typically referred to as degenerate two-dimensional. Our particular problem may be regarded as a two-dimensional (history dependent) relative of a class of one-dimensional problems studied for example in a series of papers by Alvarez (see \cite{Alvarez98}, \cite{Alvarez99} and \cite{Alvarez01} and references therein). The latter problems are neither convex nor concave, and the `critical depensation' which they exhibit is also observed in the solutions we find. Their solutions are, however, found in terms of optimal boundaries represented by points on the real axis rather than the free boundary curves studied in the present paper. An advantage of the one-dimensional setting is that general theory may be applied to develop solutions for general diffusion processes. Since additional arguments are required to verify the optimality of the free boundaries in our two-dimensional degenerate setting, however, such generality does not seem achievable and we work with the specific class of Ornstein-Uhlenbeck processes given by \eqref{OU}.

We now briefly summarise the main findings that will be discussed and proved in detail in Sections \ref{sec:connection}, \ref{Case2}, \ref{Case3} and \ref{CompleteCase}. We begin in Section \ref{sec:connection} with a useful restatement of the problem \eqref{valuefunction} as a singular stochastic control problem with discretionary stopping (SSCDS) (see Eq.\ \eqref{SSCDS} below). To the best of our knowledge SSCDS problems were originally introduced in \cite{DZ}. In that paper the authors aimed at minimising total expected costs with a quadratic running cost depending on a Brownian motion linearly controlled by a bounded variation process, and with a constant cost of exercising control. The case of finite-fuel SSCDS was then considered in \cite{KOWZ} were a terminal quadratic cost at the time of discretionary stopping was also included.
A detailed analysis of the variational inequalities arising in singular control problems with discretionary stopping may be found in \cite{Morimoto2003} and \cite{Morimoto2010}.

Our SSCDS problem \eqref{valuefunction} exhibits three regimes depending on the sign of the function
\begin{equation}
\label{def-k}
k(c):=\lambda+\theta+\lambda \,\Phi'(c)
\end{equation}
over $c \in [0,1]$. 
We will show (Section \ref{Case3}) that for fixed $c$, the sign of the function $k$ determines the nature of the relationship between the price level $x$ and the net contribution to the infimum \eqref{valuefunction} (equivalently, the infimum \eqref{SSCDS}) from exercising control. In particular, when $k>0$ this relationship is increasing and when $k<0$ it is decreasing.

Since $c \mapsto k(c)$ is strictly increasing by the strict convexity of $\Phi$ (cf.\ Assumption \ref{ass-Phi}) define $\hat{c} \in \mathbb{R}$ as the unique solution of
\beq
\label{c_one}
k(c) = 0
\eeq
should one exist, in which case $\hat{c}$ may belong to $[0,1]$ or not depending on the choice of $\Phi$ and on the value of the parameters of the model.

In Section \ref{Case2} we study the case in which $k(c) \geq 0$ for all $c \in [0,1]$ (and hence $\hat{c} \leq 0$, if it exists). We show that although problem \eqref{valuefunction} is non convex, the optimal control policy behaves as that of a convex finite-fuel singular stochastic control problem of monotone follower type (cf., e.g., \cite{ElKK88}, \cite{K85} and \cite{KaratzasShreve86}) and, accordingly, (i) the optimal control $\nu^*$ is of the reflecting type, being the minimal effort to keep the (optimally) controlled state variable inside the closure of the continuation region of an associated optimal stopping problem up to the time at which all the fuel has been spent, and (ii) the directional derivative $U_c$ of \eqref{valuefunction} in the $c$ variable coincides with the value function of the associated optimal stopping problem. In this case the infimum over stopping times is not achieved in the SSCDS formulation \eqref{SSCDS}, which may be interpreted as a formally infinite optimal stopping time.

On the other hand, in Section \ref{Case3} we assume $k(c)\leq 0$ for all $c \in [0,1]$ (and hence $\hat{c} \geq 1$). In this case the optimal singular control policy in \eqref{SSCDS} is identically zero, which may be interpreted as problem \eqref{valuefunction} becoming a stopping problem in which it is optimal to do nothing up to the first hitting time of $X$ at a repelling barrier (in the language of \cite{KOWZ}) and then to exercise all the available control. In particular the differential connection between SSC and optimal stopping observed in the previous case breaks down here and, to the best of our knowledge, this is a rare example of such an effect in the literature on SSC problems.

The case when $\hat{c}$ exists in $[0,1]$ is discussed in Section \ref{CompleteCase}. This case, in general involving multiple free-boundaries, is left as an open problem although we refer to a complete solution of the limiting case $\theta=0$ (cf.\ \eqref{OU}) derived in a companion paper \cite{DeAFeMo14b}. Finally, we collect in the Appendix the model formulation, some well known facts on the Ornstein-Uhlenbeck process $X$ and some technical results.

Before concluding this section we observe that problem \eqref{valuefunction} may also fit in the economic literature as an irreversible investment problem with stochastic investment cost. It is well known that in the presence of a convex cost criterion (or concave profit) the optimal (stochastic) irreversible investment policy consists in keeping the production capacity at or above a certain reference level $\ell$ (see, e.g., \cite{Bertola}, \cite{DP} and \cite{Pindyck}; cf.~also \cite{BK97} among others for the case of stochastic investments cost) which has been recently characterized in \cite{Ferrari2012} and \cite{RS} where it is referred to as {\em base capacity}. The index $\ell_t$ describes the desirable level of capacity at time $t$. If the firm has capacity $C_t > \ell_t$, then it faces excess capacity and should wait. If the capacity is below $\ell_t$, then it should invest $\nu_t = \ell_t - C_t$ in order to reach the level $\ell_t$.

Our analysis shows that in presence of non-convex costs it is not always optimal to invest just enough to keep the capacity at or above a base capacity level. In fact, for a suitable choice of the parameters ($\hat{c} \leq 0$) the optimal investment policy is of a purely dichotomous {\em bang-bang} type: not invest or go to full capacity. On the other hand, for a different choice of the parameters ($\hat{c} \geq 1$) a base capacity policy is optimal regardless of the non convexity of the total expected costs. To the best of our knowledge this result is a novelty also in the mathematical-economic literature on irreversible investment under uncertainty.


\subsection{A Problem with Discretionary Stopping}
\label{sec:connection}

In this section we establish the equivalence between problem \eqref{valuefunction} and a finite-fuel singular stochastic control problem with discretionary stopping (cf.~\cite{DZ} and \cite{KOWZ} as classical references on this topic). We first observe that, for fixed $(x,c)\in\RR\times[0,1]$ and any $\nu\in\cS_c$, the process $(X^x_t)_{t\ge0}$ and the processes $(I^{x,c,\nu}_t)_{t\ge 0}$, $(J^{x,\nu}_t)_{t\ge 0}$ defined by
\begin{align}\label{eq:IJ}
&I^{x,c,\nu}_t:=\int_0^{t}e^{-\lambda s}
\lambda X^x_s \Phi(C^{c,\nu}_s)ds\qquad\text{and}\qquad J^{x,\nu}_t:=\int^t_0{e^{-\lambda s}X^x_sd\nu_s},
\end{align}
respectively, are uniformly bounded in $L^2(\Omega,\PP)$, hence uniformly integrable. This is a straightforward consequence of standard properties of the Ornstein-Uhlenbeck process \eqref{OU} (see Appendix \ref{factsOU}), Assumption \ref{ass-Phi}, the finite fuel condition and an integration by parts.

\begin{prop}
\label{discretionarystopping}
Recall $U$ of \eqref{valuefunction}. Then one has $U \equiv \hat{U}$ with
\beq
\label{SSCDS}
\hat{U}(x,c)=\inf_{\nu \in \mathcal{S}_c,\, \tau \geq 0}\EE\bigg[\int_0^{\tau}e^{-\lambda s}\lambda X^x_{s}\Phi(
C^{c,\nu}_{s})ds + \int^{\tau}_0 {e^{-\lambda s}X^x_s\,d{\nu}_s} + e^{-\lambda \tau}X^x_{\tau}(1-C^{c,\nu}_{\tau})\bigg]
\eeq
for $(x,c)\in\RR\times[0,1]$ and where $\tau$ must be a $\PP$-a.s.\ finite stopping time.
\end{prop}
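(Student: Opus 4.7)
The plan is to establish the two inequalities $\hat U \le U$ and $U \le \hat U$ separately, using on one side a straightforward truncation argument and on the other a pathwise construction that exploits the identity $\Phi(1)=0$.

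For the inequality $\hat U \le U$, I would fix $\nu \in \cS_c$ and test the functional of \eqref{SSCDS} against the deterministic stopping times $\tau_n := n$, $n \in \mathbb{N}$, which are trivially $\PP$-a.s.\ finite. Writing
\[
\EE\bigg[\int_0^{n} e^{-\lambda s}\lambda X^x_s\Phi(C^{c,\nu}_s)\,ds + \int_0^{n} e^{-\lambda s}X^x_s\,d\nu_s + e^{-\lambda n}X^x_n\bigl(1-C^{c,\nu}_n\bigr)\bigg],
\]
I would let $n\to\infty$. The first two terms converge to $\mathcal{J}_{x,c}(\nu)$ by dominated convergence, which is licit by the uniform $L^2$-bound on the processes $I^{x,c,\nu}_\cdot$ and $J^{x,\nu}_\cdot$ mentioned just before the Proposition. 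For the terminal term I would use $|1-C^{c,\nu}_n|\le 1$ together with the bound $\sup_{t\ge 0}\EE[|X^x_t|^2]<\infty$ (a standard OU fact recalled in Appendix \ref{factsOU}) to conclude
\[
\bigl|\EE[e^{-\lambda n}X^x_n(1-C^{c,\nu}_n)]\bigr|\le e^{-\lambda n}\bigl(\EE|X^x_n|^2\bigr)^{1/2}\xrightarrow[n\to\infty]{}0.
\]
Taking the infimum over $\nu\in\cS_c$ then yields $\hat U(x,c)\le U(x,c)$.

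For the reverse inequality $U\le\hat U$, I would fix an admissible pair $(\nu,\tau)$ for $\hat U$ and construct
\[
\tilde\nu_t := \nu_t\,\mathbf{1}_{\{t\le\tau\}} + (1-c)\,\mathbf{1}_{\{t>\tau\}}, \qquad t\ge 0.
\]
I would check directly that $\tilde\nu$ is left-continuous, nondecreasing, $\mathbb{F}$-adapted (the set $\{t>\tau\}$ is $\cF_t$-measurable since $\tau$ is a stopping time), satisfies $\tilde\nu_0=0$ and $c+\tilde\nu_t\in[c,1]$, so that $\tilde\nu\in\cS_c$. The key observation is that $C^{c,\tilde\nu}_s = 1$ for every $s>\tau$, so by Assumption \ref{ass-Phi} the running cost integrand vanishes on $(\tau,\infty)$; and the constructed process puts an atom of size $1-C^{c,\nu}_\tau$ at time $\tau$, which in the Lebesgue-Stieltjes integral contributes exactly $e^{-\lambda\tau}X^x_\tau(1-C^{c,\nu}_\tau)$. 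Splitting the two integrals in $\mathcal{J}_{x,c}(\tilde\nu)$ at $\tau$ and collecting terms therefore gives
\[
\mathcal{J}_{x,c}(\tilde\nu)=\EE\bigg[\int_0^{\tau}e^{-\lambda s}\lambda X^x_s\Phi(C^{c,\nu}_s)\,ds+\int_0^{\tau}e^{-\lambda s}X^x_s\,d\nu_s+e^{-\lambda\tau}X^x_\tau(1-C^{c,\nu}_\tau)\bigg],
\]
which is precisely the expression in \eqref{SSCDS}. Taking the infimum over $(\nu,\tau)$ then delivers $U(x,c)\le\hat U(x,c)$.

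I expect the main delicate point to be in the second step, namely the bookkeeping around the jump of $\tilde\nu$ at the random time $\tau$: one has to respect the left-continuous convention for controls in $\cS_c$, reconcile it with the Lebesgue-Stieltjes mass that $d\tilde\nu$ places at the atom, and be careful that if $\nu$ already has a jump at $\tau$ then $\tilde\nu$ absorbs it consistently so that $\tilde\nu_{\tau+}-\tilde\nu_\tau = 1-C^{c,\nu}_\tau$ pathwise. The first step, by contrast, is essentially a uniform integrability exercise relying on the decay factor $e^{-\lambda s}$ and the bounds already established for the processes in \eqref{eq:IJ}.
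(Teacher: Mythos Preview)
Your proposal is correct and follows essentially the same route as the paper: for $\hat U\le U$ the paper also tests the SSCDS functional against deterministic times $t_n\uparrow\infty$ and uses uniform integrability plus $\EE[e^{-\lambda t_n}X^x_{t_n}(1-C^{c,\nu}_{t_n})]\to 0$, while for $U\le\hat U$ it defines exactly your control $\tilde\nu$ (called $\hat\nu$ there) and observes $\mathcal{J}_{x,c}(\hat\nu)$ equals the SSCDS expression. Your additional remarks on the jump bookkeeping at $\tau$ and on admissibility of $\tilde\nu$ simply make explicit what the paper leaves implicit.
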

\begin{proof}
Fix $(x,c)\in\RR\times[0,1]$. Take a sequence of deterministic stopping times $(t_n)_{n \in \mathbb{N}}$ such that $t_n \uparrow \infty$ as $n \uparrow \infty$ in the expectation in \eqref{SSCDS} and use uniform integrability, continuity of $X^x_{\cdot}$, $I^{x,c,\nu}_{\cdot}$, left-continuity of $J^{x,\nu}_{\cdot}$ (cf.~\eqref{eq:IJ}) and that $\lim_{n \uparrow \infty} \EE[e^{-\lambda t_n}X^x_{t_n}(1 - C^{c,\nu}_{t_n})]=0$, to obtain $\hat{U}\leq U$ in the limit as $n\to\infty$. To show the reverse inequality, for any admissible $\nu \in \mathcal{S}_c$ and any stopping time $\tau \geq 0$ set
\beq
\label{nutau}
\hat{\nu}_t:=\left\{
\begin{array}{ll}
\nu_t, & t \leq \tau, \\
\\
1-c, & t > \tau.
\end{array}
\right.
\eeq
The control $\hat{\nu}$ is admissible and then from the definition of $U$ (cf.\ \eqref{valuefunction}) it follows that
$$U(x,c) \leq \mathcal{J}_{x,c}(\hat{\nu}) = \EE\bigg[\int_0^{\tau}e^{-\lambda s}\lambda X^x_{s}\Phi(
C^{c,\nu}_{s})ds + \int^{\tau}_0 {e^{-\lambda s}X^x_s\,d{\nu}_s} + e^{-\lambda \tau}X^x_{\tau}(1-C^{c,\nu}_{\tau})\bigg].$$
Since the previous inequality holds for any admissible $\nu$ and any $\PP$-a.s.\ finite stopping time $\tau \geq 0$ we conclude that $U \leq \hat{U}$, hence $U \equiv \hat{U}$.
\end{proof}

Since the proof of Proposition \ref{discretionarystopping} does not rely on particular cost functions (running cost and cost of investment), the arguments apply to a more general class of SSC problems. However in some cases (including the convex or concave SSC problems) it turns out that {the infimum over stopping times in \eqref{SSCDS} is not achieved} 
and one should formally take $\tau = +\infty$: clearly in those cases an equivalence such as Proposition \ref{discretionarystopping} would add no insight to the analysis of the problem.
In contrast we show below that depending on the quantity $\hat{c}$ introduced through \eqref{c_one}, both the control and stopping policies in \eqref{SSCDS} may play either trivial or nontrivial roles through the interplay of two free-boundaries. A complete analysis of the interplay of these two free-boundaries is outside the scope of this paper and a challenging open problem (discussed in Section \ref{CompleteCase}).


\section{The Case $\hat{c} \; \leq \; 0$}
\label{Case2}

In this section we identify when the differential relationship between SSC and optimal stopping known in convex problems of monotone follower type with finite fuel (cf., e.g., \cite{ElKK88}, \cite{K85} and \cite{KaratzasShreve86}) holds in our non-convex problem. In this case, as discussed above one should formally set $\tau^*=+\infty$ in \eqref{SSCDS}.
We find that the differential relationship holds when $k(c)>0$ (cf.\ \eqref{def-k}) for all $c \in [0,1]$ or, equivalently, when $\hat{c} <0$: in this case the derivative (with respect to $c$, the direction of the control variable) of the value function in  \eqref{valuefunction} is given by the value function of the family of optimal stopping problems solved below and the optimal control $\nu^*$ is of reflection type, being the minimal effort to keep the (optimally) controlled state variable $C^{c,\nu^*}$ above the corresponding non-constant free boundary. The case $\hat c = 0$ is similar, see Remark \ref{limitbstar}.





\subsection{The Associated Family of Optimal Stopping Problems}\label{aosp}

The family of infinite time-horizon optimal stopping problems we expect to be naturally associated to the control problem \eqref{valuefunction} is given by
\begin{align}
\label{opt-st}
v(x;c):=\sup_{\sigma\ge0}\EE\bigg[-e^{-\lambda\sigma} X^x_\sigma+\int_0^{\sigma}e^{-\lambda s}\lambda\,X^x_{s}\Phi'(c)ds\bigg], \quad c \in [0,1],
\end{align}
where the supremum is taken over all $\PP$-a.s.\ finite stopping times $\sigma$ (see, for example, \cite{ElKK88}, \cite{KaratzasShreve84} or \cite{K85}, among others). For any given value of $c \in [0,1]$, \eqref{opt-st} is a one-dimensional optimal stopping problem that can be addressed through a variety of well established methods. As $c$ varies the optimal stopping boundary points for problem \eqref{opt-st} will serve to construct the candidate optimal boundary of the action region of problem \eqref{valuefunction} and, as noted in the Introduction, we will therefore require sufficient monotonicity and regularity of this free boundary curve to verify its optimality. 

Define
\begin{eqnarray}
\displaystyle G(x;c) := \frac{\mu(k(c)-\theta)}{\lambda} + \frac{k(c)(x-\mu)}{\lambda+\theta},\qquad(x,c)\in\RR\times[0,1], \label{Gexpression}
\end{eqnarray}
\begin{align}\label{x0-01}
x_0(c):=-\,\frac{\theta\mu\Phi'(c)}{k(c)}>0,\qquad c\in[0,1]
\end{align}
and let $\LL$ be the infinitesimal generator of the diffusion $X^x$, i.e.
\begin{align}\label{def:LX}
\mathbb{L}_{X}f\,(x):=\frac{1}{2}\sigma^2 f''(x) + \theta(\mu - x)f'(x),\quad\text{for $f\in C^2_b(\RR)$ and $x\in\RR$.}
\end{align}
The next theorem is proved in Appendix \ref{app-proof1} and provides a characterisation of $v$ in \eqref{opt-st} and of the related optimal stopping boundary.

\begin{theorem}
\label{verifthm01}
For each given $c\in[0,1]$ one has $v(x;c)=-x+u(x;c)$ where
\begin{align}
\label{def-u00}
u(x;c):=\left\{
\begin{array}{ll}
G(x;c)-\frac{G(\beta_*(c);c)}{\phi_\lambda(\beta_*(c))}\phi_\lambda(x), & x>\beta_*(c) \\[+4pt]
0, & x\le\beta_*(c)
\end{array}
\right.
\end{align}
with $\phi_\lambda$ the strictly decreasing fundamental solution of $\LL f=\lambda f$ (cf.~\eqref{phi} in Appendix) and ${\beta_*(c)}\in(-\infty, x_0(c))$ the unique solution of problem:
\begin{align}
\label{smfit01}
\text{find $x\in\RR$:}\quad G_x(x;c)-\frac{G(x;c)}{\phi_{\lambda}(x)}\phi'_{\lambda}(x)=0.
\end{align}
Moreover
\begin{align}\label{eq:sigmastar}
\sigma^*:=\inf\{t \geq 0:X^x_t \leq {\beta_*(c)}\}
\end{align}
is an optimal stopping time in \eqref{opt-st} and $c\mapsto \beta_*(c)$ is strictly decreasing and, if $\hat{c} < 1$, it is $C^1$ on $[0,1]$.
\end{theorem}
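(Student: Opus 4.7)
The plan is to reduce \eqref{opt-st} to a terminal-reward stopping problem with an explicit particular solution $G$, identify the free boundary via smooth fit, and then handle verification and regularity in $c$. First, I apply It\^o's formula to $e^{-\lambda s}X^x_s$ between $0$ and any $\PP$-a.s.\ finite stopping time $\sigma$, using the Ornstein--Uhlenbeck estimates of Appendix \ref{factsOU} to justify uniform integrability. This gives $\EE[e^{-\lambda\sigma}X^x_\sigma]=x-\EE\int_0^\sigma e^{-\lambda s}((\lambda+\theta)X^x_s-\theta\mu)\,ds$, so substituting in \eqref{opt-st} yields $v(x;c)=-x+u(x;c)$ with
\[
u(x;c)=\sup_\sigma\EE\bigg[\int_0^\sigma e^{-\lambda s}\bigl(k(c)X^x_s-\theta\mu\bigr)ds\bigg].
\]
A direct computation shows that the affine function $G(\cdot;c)$ of \eqref{Gexpression} satisfies $(\lambda-\LL)G(\cdot;c)=k(c)x-\theta\mu$, so a second Dynkin application produces
\[
u(x;c)=G(x;c)-\inf_\sigma\EE\bigl[e^{-\lambda\sigma}G(X^x_\sigma;c)\bigr].
\]

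Next, since $G(x;c)=\tfrac{k(c)}{\lambda+\theta}(x-x_0(c))$ is strictly increasing in $x$ (as $k(c)>0$ throughout this section, since $\hat c<0$) and vanishes at $x_0(c)$, the natural candidate stopping set for the inner infimum is a half-line $\{x\le\beta_*(c)\}$ with $\beta_*(c)<x_0(c)$. In the continuation region $u-G$ must solve $(\LL-\lambda)(u-G)=0$; ruling out the increasing fundamental solution $\psi_\lambda$ by boundedness of $u-G$ as $x\to+\infty$ forces $u(x;c)-G(x;c)$ to be a multiple of $\phi_\lambda(x)$, and continuity at $\beta_*(c)$ fixes the multiplier, yielding \eqref{def-u00}. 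Imposing smooth fit produces \eqref{smfit01}. Using the linearity of $G$, I would rewrite \eqref{smfit01} as $\phi_\lambda(b)=(b-x_0(c))\phi'_\lambda(b)$ and study the function $b\mapsto(x_0(c)-b)(-\phi'_\lambda(b))/\phi_\lambda(b)$, which is positive on $(-\infty,x_0(c))$, vanishes at $x_0(c)$, and diverges as $b\to-\infty$ by the OU asymptotics of $\phi_\lambda$; strict monotonicity of this function then gives a unique $\beta_*(c)$.

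Verification that \eqref{def-u00} is indeed the value function is then standard: by smooth fit the candidate belongs to $C^1(\RR)\cap C^2(\RR\setminus\{\beta_*(c)\})$, and one checks $u(\cdot;c)\ge 0$ together with $(\LL-\lambda)u+(k(c)x-\theta\mu)\le 0$ on $\RR$, with equality on $\{x>\beta_*(c)\}$. Applying It\^o--Dynkin to $e^{-\lambda s}u(X^x_s;c)$ (with a localising sequence absorbing the kink at $\beta_*(c)$) and passing to the limit using the OU estimates yields $u(x;c)$ as an upper bound for the value along any admissible $\sigma$, with equality at $\sigma^*$ of \eqref{eq:sigmastar}; $\PP$-a.s.\ finiteness of $\sigma^*$ follows from recurrence of $X$.

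Finally, for $c\mapsto\beta_*(c)$, strict convexity of $\Phi$ (Assumption \ref{ass-Phi}) gives $k'(c)=\lambda\Phi''(c)>0$, so the coefficients entering the smooth-fit equation depend smoothly on $c$. Provided $\hat c<1$ (so $k(c)>0$ strictly on $[0,1]$), the implicit function theorem applied to \eqref{smfit01} yields $c\mapsto\beta_*(c)\in C^1([0,1])$, and a sign analysis of the resulting expression produces the strict monotonicity $\beta'_*(c)<0$. The main obstacle I anticipate is the uniqueness/monotonicity step for $\beta_*(c)$: it requires sharper qualitative information about the OU fundamental solution $\phi_\lambda$ than is recorded in the appendix, and will likely proceed either via a Wronskian argument exploiting $G_{xx}\equiv 0$ or via a log-convexity property of $\phi_\lambda$.
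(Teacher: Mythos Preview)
Your strategy matches the paper's proof (Appendix \ref{app-proof1}) closely: the reduction $v=-x+u$ via integration by parts, the representation through $G$, the candidate \eqref{def-u00}, smooth fit, and It\^o--Tanaka verification all appear there in essentially the form you describe. Your anticipated Wronskian device is exactly what is used for uniqueness: with $H:=G_x\phi_\lambda-G\phi'_\lambda$, linearity of $G$ gives $H_x=-G\phi''_\lambda>0$ on $(-\infty,x_0(c))$ (since $G<0$ there and $\phi''_\lambda>0$), and together with $H(x_0(c);c)>0$ this yields the unique root.

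Two refinements are worth flagging. In the verification step, the variational inequality $(\LL-\lambda)u+k(c)x-\theta\mu\le0$ on the stopping set $\{x\le\beta_*(c)\}$ (where $u\equiv0$) reduces to $\beta_*(c)\le\hat x_0(c):=\theta\mu/k(c)$; this is \emph{not} implied by $\beta_*(c)<x_0(c)$ in general, and the paper proves it by a separate short argument using the ODE for $\phi_\lambda$ and $\phi''_\lambda>0$. For the monotonicity of $c\mapsto\beta_*(c)$, the paper explicitly remarks that a direct sign analysis of $-H_c/H_x$ ``seems non-trivial''; instead it exploits your own reformulation $\phi'_\lambda(b)/\phi_\lambda(b)=1/(b-x_0(c))$, in which $c$ enters only through the strictly decreasing map $c\mapsto x_0(c)$, to show that $H(\beta_*(c);c')>0$ for $c'>c$ close to $c$ and hence $\beta_*(c')<\beta_*(c)$. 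So the monotonicity actually falls out more cleanly from your reformulated smooth-fit equation than from the implicit-function derivative you propose.
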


\begin{remark}\label{rem:gendif}
The monotonicity of the boundary, crucial for the verification theorem below, is obtained using specific properties of the diffusion $X$ (through the function $\phi_\lambda$) and of the cost functional. To the best of our knowledge general results of this kind for a wider class of diffusions cannot be provided in this non-convex setting either by probabilistic or analytical methods; thus a study on a case by case basis is required.
We note in fact that in \cite{DeAFeMo14b} in a setting similar to the present one but with a different choice of $X$ the geometry of the action and inaction regions for the control problem is quite different.
\end{remark}

\begin{remark}
\label{limitbstar}
In the case when $\hat{c}=0$ (cf.~\eqref{c_one}) one only has $\beta_* \in C^1((0,1])$, as in fact $\lim_{c\downarrow \hat{c}}{\beta_*(c)}=+\infty$ along with its derivative. This follows by noting that taking $y=\beta_*(c)$ in \eqref{smfit01} and passing to the limit as $c\downarrow\hat{c}$, if $\lim_{c\downarrow\hat{c}}\beta_*(c)=\ell<+\infty$ one finds a contradiction. For $c=\hat{c}$ the optimal stopping time for problem \eqref{opt-st} is $\sigma^*=0$ for any $x \in \mathbb{R}$.
\end{remark}

\subsection{The Solution of the Stochastic Control Problem}\label{sscsolution}

In this section we aim at providing a solution to the finite-fuel singular stochastic control problem \eqref{valuefunction} by starting from the solution of the optimal stopping problem \eqref{opt-st} (see also \eqref{osprob}) and guessing that the classical connection to singular stochastic control holds.

By Theorem \ref{verifthm01} we know that $c \mapsto {\beta_*(c)}$ is strictly decreasing and so has a strictly decreasing inverse. We define
\begin{align}\label{def-gstar}
g_*(x):=
\left\{
\begin{array}{ll}
1, & x\le \beta_*(1)\\[+2pt]
 \beta^{-1}_*(x), & x\in(\beta_*(1),\beta_*(0))\\[+2pt]
0, & x\ge \beta_*(0).
\end{array}
\right.
\end{align}
Obviously $g_*: \mathbb{R} \to [0,1]$ is continuous and decreasing. Moreover, since $\beta_* \in C^1$ and $\beta_*' < 0$ (cf.\ again Theorem \ref{verifthm01}), then $g_*'$ exists almost everywhere and it is bounded.

Define the function
\beq
\label{def-F}
F(x,c) := -\int_c^1v(x;y)dy=x(1-c)-\int_c^1u(x;y)dy.
\eeq
We expect that $F(x,c)=U(x,c)$ for all $(x,c) \in \mathbb{R}\times [0,1]$, with $U$ as defined in \eqref{valuefunction}.
\begin{prop}
\label{listprops}
The function $F(x,c)$ in \eqref{def-F} is such that $x\mapsto F(x,c)$ is concave, $F\in C^{2,1}(\mathbb{R}\times[0,1])$ and the following bounds hold
\begin{align}\label{boundsF}
\big|F(x,c)\big| + \big|F_c(x,c)\big|\le C_1(1+|x|),\quad \big|F_x(x,c)\big| + \big|F_{xx}(x,c)\big|\le C_2
\end{align}
for $(x,c)\in\RR\times[0,1]$ and some positive constants $C_1$ and $C_2$.
\end{prop}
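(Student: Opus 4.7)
The plan is to exploit the representation
$F(x,c) = -\int_c^1 v(x;y)\,dy = x(1-c) - \int_c^1 u(x;y)\,dy$
together with the explicit form of $u$ provided by Theorem \ref{verifthm01}.

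\textbf{Concavity of $x\mapsto F(x,c)$.} I would use that the Ornstein--Uhlenbeck process admits the Gaussian representation $X^x_t = x e^{-\theta t} + \mu(1-e^{-\theta t}) + \sigma\int_0^t e^{-\theta(t-s)}\,dB_s$, which is affine in the initial condition $x$. As the admissible stopping times in \eqref{opt-st} are $\mathbb{F}$-adapted and hence independent of $x$, the integrand inside the expectation in \eqref{opt-st} is pathwise affine in $x$ for each fixed $\sigma$; taking expectations preserves affineness, so $v(\cdot;y)$ is a supremum of affine maps in $x$, hence convex. Consequently $F(\cdot,c)$ is concave as an integral average of the concave functions $-v(\cdot;y)$, $y\in[c,1]$.

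\textbf{Regularity.} Differentiating under the integral I obtain $F_c(x,c) = -x + u(x;c)$, $F_{xc}(x,c) = -1 + u_x(x;c)$, $F_x(x,c) = (1-c) - \int_c^1 u_x(x;y)\,dy$ and $F_{xx}(x,c) = -\int_c^1 u_{xx}(x;y)\,dy$. The smooth-fit equation \eqref{smfit01} together with the explicit formula \eqref{def-u00} and $\beta_*\in C^1([0,1])$ (Theorem \ref{verifthm01}) yield that $u$ and $u_x$ are jointly continuous on $\RR\times[0,1]$; continuity of $F_c$ and $F_{xc}$ is then immediate, and continuity of $F_x$ follows by dominated convergence. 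The crucial step is $F_{xx}$: the map $y\mapsto u_{xx}(x;y)$ is bounded and continuous except at the unique $y$ (if any) where $x = \beta_*(y)$, by strict monotonicity of $\beta_*$; a further dominated convergence argument sending $(x_n,c_n)\to(x,c)$ then delivers continuity of $F_{xx}$.

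\textbf{Bounds and main obstacle.} For the growth bounds I would exploit the compactness of $\beta_*([0,1])$ together with the fact that $\phi_\lambda$ is smooth, strictly positive, decreasing and vanishes at $+\infty$: thus $\phi_\lambda(\beta_*(\cdot))$ is bounded away from $0$ and $\phi_\lambda,\phi_\lambda',\phi_\lambda''$ are bounded on $[\beta_*(1),\infty)$, which contains every $x$ where $u(x;y)$ can be nonzero for some $y\in[0,1]$. Combined with the affinity of $G(\cdot;c)$ in $x$ and the uniform boundedness of its coefficients over $c\in[0,1]$, the formula \eqref{def-u00} yields $|u(x;c)|\le C(1+|x|)$ and $|u_x|+|u_{xx}|\le C$ uniformly in $c$, from which \eqref{boundsF} is immediate. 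The main obstacle I anticipate is precisely the regularity of $F_{xx}$ across the free-boundary curve $\{x=\beta_*(c)\}$, since $u_{xx}$ genuinely jumps there; the point is that integration over $c$ averages out that codimension-one discontinuity, and this is exactly the mechanism that gives $F$ one extra $x$-derivative compared with the stopping value $v(\cdot;c)$.
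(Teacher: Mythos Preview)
Your proposal is correct and tracks the paper's proof closely: both exploit the explicit formula \eqref{def-u00}, differentiate under the integral, and read off the bounds from the behaviour of $\phi_\lambda$ and the compactness of $\beta_*([0,1])$. Two tactical differences are worth noting.

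For concavity, the paper simply quotes convexity of $x\mapsto u(x;c)$ (established in the verification of Theorem~\ref{verifthm01} via the explicit formula), whereas you give an independent probabilistic argument based on the affineness of $x\mapsto X^x_t$ and the fact that a supremum of affine functions is convex. Your route is more self-contained and would survive a change of running cost or diffusion, while the paper's is shorter given what has already been proved.

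For the continuity of $F_{xx}$, the paper rewrites
\[
F_{xx}(x,c)=-\int_{g_*(x)\vee c}^{1}u_{xx}(x;y)\,dy,
\]
using that $u_{xx}(x;y)=0$ for $y\le g_*(x)$; on this restricted range the integrand is jointly continuous and the moving lower limit $g_*(x)\vee c$ is continuous, so continuity of $F_{xx}$ is immediate. Your dominated-convergence argument (pointwise a.e.\ convergence in $y$ thanks to strict monotonicity of $\beta_*$, uniform bounds from the compactness of $\beta_*([0,1])$) reaches the same conclusion without the change of lower limit; the paper's device is slightly slicker but both are valid.
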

\begin{proof}
In this proof we will often refer to the proof of Theorem \ref{verifthm01} in Appendix \ref{app-proof1}. Recall \eqref{def-u00} and that $u^{\beta_*} \equiv u$ (cf.\ Theorem \ref{verifthm01}). Concavity of $F$ as in \eqref{def-F} easily follows by observing that $x\mapsto u(x;c)$ is convex (cf.\ again Theorem \ref{verifthm01}). It is also easy to verify from \eqref{Gexpression} and \eqref{def-u00} that $u$ is of the form $u(x;c)=A(c)P(x)+B(c)$ for suitable continuous functions $A$, $B$ and $P$, so that $(x,c)\mapsto F(x,c)$ is continuous on $\mathbb{R}\times[0,1]$ and $c \mapsto F_c(x,c)$ is continuous on $[0,1]$ as well. From the definition of $u^{\beta_*}$ (cf.~\eqref{def-u00}), \eqref{smfit01}, convexity of $u^{\beta_*}$ and continuity of $\beta_*$ it is straightforward to verify that for $x\in K\subset\mathbb{R}$, $K$ bounded, $|u_x|$ and $|u_{xx}|$ are at least bounded by a function $Q_K(c)\in L^1(0,1)$. It follows that evaluating $F_x$ and $F_{xx}$ one can pass derivatives inside the integral in \eqref{def-F} so to obtain
\begin{align}
\label{Fx}
F_x(x,c) = (1-c) - \int_c^1 u_x(x;y)dy=(1-c) - \int_{g_*(x)\vee c}^1 u_x(x;y)dy
\end{align}
and
\begin{align}
\label{Fxx}
F_{xx}(x,c) = - \int_c^1 u_{xx}(x;y)dy= - \int_{g_*(x)\vee c}^1 u_{xx}(x;y)dy.
\end{align}
Therefore $F\in C^{2,1}$ by \eqref{def-u00}, \eqref{smfit01}, convexity of $u$ (cf.\ Theorem \ref{verifthm01}) and continuity of $g_*(\cdot)$ (cf.\ \eqref{def-gstar}).

Recall now that $\phi_\lambda(x)$ and all its derivatives go to zero as $x\to\infty$ and \eqref{def-gstar}. Then bounds \eqref{boundsF} follow from \eqref{def-u00}, \eqref{def-F}, \eqref{Fx} and \eqref{Fxx}.
\end{proof}

From standard theory of stochastic control (e.g., see \cite{FlemingSoner}, Chapter VIII), we expect that the value function $U$ of \eqref{valuefunction} identifies with an appropriate solution $w$
to the Hamilton-Jacobi-Bellman (HJB) equation
\beq
\label{HJB-U}
\max\{-\mathbb{L}_X w+ \lambda w-\lambda x \Phi(c),-w_c-x\}=0\quad\text{for a.e.~$(x,c)\in\RR\times[0,1]$}.
\eeq
Recall Proposition \ref{listprops}.
\begin{prop}
\label{prop6}
For all $(x,c) \in \mathbb{R} \times [0,1]$ we have that F is a classical solution of \eqref{HJB-U}.
\end{prop}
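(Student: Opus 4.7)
\textbf{Proof plan for Proposition \ref{prop6}.} The HJB \eqref{HJB-U} has two entries, the differential term $A(x,c) := -\mathbb{L}_X F + \lambda F - \lambda x \Phi(c)$ and the gradient constraint $B(x,c) := -F_c - x$. The plan is to show both are nonpositive pointwise and that for every $(x,c)$ at least one of them vanishes, with the two alternatives corresponding to the regions $\{c \geq g_*(x)\}$ and $\{c \leq g_*(x)\}$; the $C^{2,1}$ regularity from Proposition \ref{listprops} guarantees that the two cases match continuously on the free-boundary curve $c = g_*(x)$.

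First I would dispose of $B$: using Leibniz on \eqref{def-F} together with Theorem \ref{verifthm01} gives $F_c(x,c) = v(x;c) = -x + u(x;c)$, so $B(x,c) = -u(x;c) \leq 0$ globally, since $u \geq 0$ by construction (take $\sigma = 0$ in \eqref{opt-st} or read off \eqref{def-u00}). Equality holds precisely on the stopping set $\{x \leq \beta_*(c)\}$, which by strict monotonicity of $\beta_*$ (Theorem \ref{verifthm01}) is the same as $\{c \leq g_*(x)\}$.

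Next I would compute $A$. Writing
\[
F(x,c) = x(1-c) - \int_{c \vee g_*(x)}^1 u(x;y) \, dy
\]
(permissible since $u(x;y) = 0$ for $y \leq g_*(x)$) and passing $\partial_x$ and $\partial_{xx}$ inside the integral, the boundary contributions that a priori appear at $y = g_*(x)$ are of the form $g_*'(x)\,u(x; g_*(x))$ and $g_*'(x)\,u_x(x; g_*(x))$; both vanish by value matching and smooth fit at $\beta_*(g_*(x)) = x$, the latter being exactly \eqref{smfit01}. Since $\phi_\lambda$ solves $\mathbb{L}_X\phi_\lambda = \lambda\phi_\lambda$, the representation \eqref{def-u00} yields $\mathbb{L}_X u - \lambda u = \mathbb{L}_X G - \lambda G$ whenever $y > g_*(x)$, and a short computation from \eqref{Gexpression} gives $\mathbb{L}_X G - \lambda G = \theta\mu - x\,k(y)$. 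Combining this with the identity $\int_c^1 k(y)\,dy = (\lambda+\theta)(1-c) - \lambda\Phi(c)$ (using $\Phi(1)=0$), I expect a telescoping collapse to
\[
A(x,c) = \int_c^{c \vee g_*(x)} \bigl[x\,k(s) - \theta \mu\bigr] \, ds.
\]

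The sign analysis is then immediate. If $c \geq g_*(x)$ the interval is empty, $A = 0$, and $B \leq 0$ by the first step. If instead $c < g_*(x)$, then for every $s \in [c, g_*(x)]$ monotonicity of $\beta_*$ gives $x \leq \beta_*(s)$; this is precisely the stopping region of \eqref{opt-st} for the parameter $s$, where the obstacle inequality $\mathbb{L}_X(-x) - \lambda(-x) + \lambda x \Phi'(s) \leq 0$ rearranges to $x k(s) - \theta \mu \leq 0$. Hence the integrand in the formula for $A$ is nonpositive, $A \leq 0$, while $B = 0$ here because $u(x;c)=0$. In both cases $\max\{A,B\} = 0$, which is \eqref{HJB-U}.

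The main obstacle I anticipate is the differentiation under the integral in the computation of $A$: the lower limit $c \vee g_*(x)$ is only Lipschitz in $x$, so a priori one obtains boundary contributions that would spoil the clean identity for $A$. Their cancellation is not automatic and hinges squarely on the value-matching and smooth-fit conditions packaged in Theorem \ref{verifthm01}. Once those are exploited, the remainder is a direct algebraic check which reduces everything to the sign of $x k(s) - \theta \mu$ on the stopping set, controlled by the obstacle inequality associated with \eqref{opt-st}.
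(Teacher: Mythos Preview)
Your proposal is correct and follows essentially the same route as the paper: both differentiate under the integral in \eqref{def-F} (the boundary terms at $y=g_*(x)$ vanish by value matching and smooth fit, which the paper packages in Proposition \ref{listprops}) and then invoke the obstacle inequality $xk(s)\le\theta\mu$ on the stopping set (recorded in the paper as \eqref{eq:xhat}). The only cosmetic difference is that you compute the differential term exactly, obtaining $A(x,c)=\int_c^{c\vee g_*(x)}[xk(s)-\theta\mu]\,ds$, whereas the paper reaches the same conclusion via the one-line inequality $\int_c^1(\mathbb{L}_X-\lambda)u(x;y)\,dy\le\int_c^1[\theta\mu-k(y)x]\,dy$.
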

\begin{proof}
First we observe that \eqref{Gexpression} and \eqref{def-F} give
\begin{align} \label{str}
F(x,c)\hspace{-1pt}=\hspace{-1pt}\mu\Phi(c)\hspace{-1pt}+\hspace{-1pt}(x-\mu)\frac{\lambda\Phi(c)}{\lambda+\theta}
+\phi_{\lambda}(x)\hspace{-3pt}\int_c^1
\hspace{-2pt}\frac{G({\beta_*(y)};y)}{\phi_{\lambda}({\beta_*(y)})}dy\quad\text{for all $c>g_*(x)$}
\end{align}
For any fixed $c\in[0,1]$ and $x\in\RR$ such that $F_c(x,c)>-x$, i.e.~$c>g_*(x)$ (cf.~\eqref{def-F}), one has
\begin{align*}
(\mathbb{L}_X- \lambda)F(x,c)=-\lambda\Phi(c)x
\end{align*}
by \eqref{str}. On the other hand, for arbitrary $(x,c)\in\mathbb{R}\times[0,1]$ we notice that
\begin{equation*}
\label{eq:ffuu}
(\mathbb{L}_X-\lambda )F(x,c)=(1-c)(\theta\mu-(\lambda+\theta)x)-\int_c^1(\mathbb{L}_X-\lambda)u(x;y)dy
\end{equation*}
by \eqref{Fx} and \eqref{Fxx}. Now, recalling \eqref{eq:diffu} one has
$$
\int_c^1(\mathbb{L}_X- \lambda)u(x;y) dy \leq \int_c^1 [\theta\mu-k(y)x] dy = [\theta\mu-(\lambda+\theta)x](1-c)+\lambda\Phi(c)x,
$$
since $\theta\mu-k(c)x \geq 0$ when $F_c(x,c)=-x$, i.e.~$c<{g_*(x)}$, by \eqref{eq:xhat}. Then
\begin{align*}
(\mathbb{L}_X-\lambda)F(x,c) \geq -\lambda \Phi(c)x\quad\text{for all $(x,c) \in \mathbb{R} \times [0,1]$.}
\end{align*}
\end{proof}

We now aim at providing a candidate optimal control policy $\nu^*$ for problem \eqref{valuefunction}. Let $(x,c) \in \mathbb{R} \times [0,1]$ and consider the process
\beq
\label{candidateoptimalcontrol}
\nu_t^*=\Big[g_*\big(\inf_{0 \leq s \leq t} X^x_s\big)-c\Big]^+, \qquad t>0, \quad
\nu_0^*= 0,
\eeq
with $g_*$ as in \eqref{def-gstar} and $[\, \cdot \,]^+$ denoting the positive part.
\begin{prop}
\label{admissiblenustar}
The process $\nu^*$ of \eqref{candidateoptimalcontrol} is an admissbile control.
\end{prop}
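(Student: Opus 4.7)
The plan is simply to verify each clause in the definition \eqref{admissiblecontrols} of $\mathcal{S}_c$ for the process $\nu^*$ given by \eqref{candidateoptimalcontrol}, using the properties of $g_*$ established in Theorem \ref{verifthm01} and the path regularity of the Ornstein--Uhlenbeck process $X^x$ of \eqref{OU}. The main ingredients are: (i) the running minimum $m_t:=\inf_{0\le s\le t}X^x_s$ is a $\mathbb{P}$-a.s.\ continuous, non-increasing, $\mathbb{F}$-adapted process (as $X^x$ has continuous sample paths); and (ii) $g_*:\mathbb{R}\to[0,1]$ is continuous and decreasing (cf.\ \eqref{def-gstar}, using that $\beta_*$ is strictly decreasing and $C^1$ on $[0,1]$, hence its inverse is continuous on $(\beta_*(1),\beta_*(0))$, and the boundary values match).

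First I would set $\nu_0^*=0$ by definition and note $\nu^*_t\ge 0$ for all $t>0$ since it is given by a positive part. For monotonicity, $t\mapsto m_t$ is non-increasing; composing with the decreasing map $g_*$ yields that $t\mapsto g_*(m_t)$ is non-decreasing, and subtracting the constant $c$ and taking positive parts preserves this, so $t\mapsto\nu^*_t$ is non-decreasing on $(0,\infty)$. The fuel constraint $c+\nu^*_t\le 1$ follows at once from $g_*\le 1$: indeed $[g_*(m_t)-c]^+\le 1-c$ for every $t>0$.

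For adaptedness, $m_t$ is $\mathcal{F}_t$-measurable because $X^x$ is continuous and $\mathbb{F}$-adapted (so the $\inf$ reduces to a countable operation over rationals in $[0,t]$); then $g_*(m_t)$ is $\mathcal{F}_t$-measurable by continuity of $g_*$, and likewise the positive part $\nu_t^*$. Left-continuity follows from something stronger: continuity of $X^x$ implies $t\mapsto m_t$ is continuous, and $g_*$ is continuous, so $t\mapsto\nu^*_t$ is continuous on $(0,\infty)$; it remains to verify left-continuity at $t=0^+$, which holds because $\lim_{t\downarrow 0}m_t=x$ and $\nu_0^*=0\le[g_*(x)-c]^+=\lim_{t\downarrow 0}\nu^*_t$ (the limit exists, so the process is right-continuous at $0$, and the left-continuity requirement at $t=0$ is vacuous since $\nu^*$ is defined on $[0,\infty)$ with the value at $0$ being the starting point).

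There is no serious obstacle here; the statement is essentially a bookkeeping check. The only point that could be mildly delicate is handling the potential initial jump $\nu^*_{0+}-\nu^*_0=[g_*(x)-c]^+$, which is consistent with left-continuity as stated in \eqref{admissiblecontrols} (the left-continuous modification assigns the value $0$ at $t=0$ and the post-jump value immediately after). I would make this explicit and conclude that $\nu^*\in\mathcal{S}_c$.
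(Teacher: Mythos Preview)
Your proposal is correct and follows essentially the same approach as the paper: both verify the clauses of \eqref{admissiblecontrols} one by one, using that $g_*$ is continuous, decreasing and $[0,1]$-valued together with path continuity of $X^x$. The only minor difference is in the adaptedness step: you argue directly that $m_t=\inf_{0\le s\le t}X^x_s$ is $\mathcal{F}_t$-measurable via a countable infimum over rationals, whereas the paper first rewrites $g_*(m_t)=\sup_{0\le s\le t}g_*(X^x_s)$ by monotonicity of $g_*$ and then invokes a progressive-measurability result from Dellacherie--Meyer; your route is more elementary and equally valid.
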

\begin{proof}
Fix $\omega\in\Omega$ and recall \eqref{admissiblecontrols}. By definition $t\mapsto \nu^*_t(\omega)$ is clearly increasing and such that $C^{c,\nu^*}_t(\omega) \leq 1$, for any $t\geq 0$, since $0\le g_*(x)\le 1$, $x\in\RR$. The map $x \mapsto g_*(x)$ is continuous, then $t \mapsto \nu^*_t(\omega)$ is continuous, apart of a possible initial jump at $t=0$, by continuity of paths $t\mapsto X^x_t(\omega)$.

To prove that $\nu^* \in \mathcal{S}_c$ it thus remains to show that $\nu^*$ is $(\mathcal{F}_t)$-adapted. To this end, first of all notice that continuity of $g_*(\cdot)$ also implies its Borel measurability and hence progressive measurability of the process $g_*(X^x)$. Then $\nu^*$ is progressively measurable since $g_*\big(\inf_{0 \leq s \leq t} X^x_s\big) = \sup_{0 \leq s \leq t} g_*(X^x_s)$, by monotonicity of $g_*$, and by \cite{DM}, Theorem IV.33. Hence $\nu^*$ is $(\mathcal{F}_t)$-adapted.
\end{proof}
To show optimality of $\nu^*$ we introduce the \emph{action} and \emph{inaction} sets
\begin{align}
\label{def:CSsets}
\CC:=\big\{(x,c)\,:\,F_c(x,c)>-x\big\}\quad\text{and}\quad
\DD:=\big\{(x,c)\,:\,F_c(x,c)=-x\big\},
\end{align}
respectively and with $(x,c)\in\RR\times[0,1]$. Their link to the sets defined in \eqref{regions} is clear by recalling that $F_c=u$. The following Proposition, which is somewhat standard (see, e.g., \cite{KS}, p.\ 210 and \cite{Skorokhod} as classical references on the topic), is proved in Appendix \ref{app-proof2}.
\begin{prop}
\label{C-opt}
Let $C^*_t:=C^{c,\nu^*}_t=c+\nu^*_t$, with $\nu^*$ as in \eqref{candidateoptimalcontrol}. Then $\nu^*$ solves the Skorokhod problem
\begin{enumerate}
\item $\displaystyle (C^*_t, X^x_t) \in \mathcal{\overline{C}}$, $\PP$-almost surely, for each $t > 0$;
\item $\displaystyle \int_0^T e^{-\lambda t}\mathds{1}_{\{(C^*_t, X^x_t) \in \mathcal{C}\}}d\nu_t^*=0$ almost surely, for all $T \geq 0$,
\end{enumerate}
where $\mathcal{\overline{C}}:=\{(x,c)\in\RR\times[0,1]:c \geq g_*(x)\}$ denotes the closure of the inaction region $\mathcal{C}$ (cf.\ \eqref{def:CSsets}).
\end{prop}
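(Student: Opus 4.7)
The plan is to verify properties (1) and (2) directly from the explicit running-minimum formula \eqref{candidateoptimalcontrol}. For fixed $\omega\in\Omega$, introduce the continuous nonincreasing running minimum $m_t := \inf_{0 \leq s \leq t} X^x_s(\omega)$, so that for $t>0$ one has $\nu^*_t(\omega) = [g_*(m_t) - c]^+$ and hence $C^*_t(\omega) = \max\{c,\, g_*(m_t)\}$.

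For property (1), I would observe that $m_t \leq X^x_t$ and $g_*$ is decreasing by Theorem \ref{verifthm01} and \eqref{def-gstar}, so $g_*(m_t) \geq g_*(X^x_t)$. Moreover, if the maximum defining $C^*_t$ is attained by $c$, then $g_*(m_t) \leq c$, so a fortiori $g_*(X^x_t) \leq g_*(m_t) \leq c$. In either case $C^*_t \geq g_*(X^x_t)$, i.e.~$(C^*_t, X^x_t) \in \mathcal{\overline{C}}$.

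For property (2), I would characterise the support of the Stieltjes measure $d\nu^*_t$. The running minimum $m$ is constant on any open interval on which $X^x > m$, by definition of infimum; consequently $g_*(m_\cdot)$, and therefore $\nu^*_\cdot = [g_*(m_\cdot) - c]^+$, are constant there. Hence $\nu^*$ can only increase on the (random) set $\Lambda(\omega) := \{t \geq 0 : X^x_t = m_t\text{ and } g_*(m_t) > c\}$. At any $t \in \Lambda$ one has $C^*_t = g_*(m_t) = g_*(X^x_t)$, so $(C^*_t, X^x_t)$ lies on the boundary $\{c = g_*(x)\}$ and the indicator $\mathds{1}_{\{(C^*_t, X^x_t) \in \mathcal{C}\}}$ vanishes on $\Lambda$. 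The possible initial jump $\nu^*_{0+} = [g_*(x) - c]^+$ requires separate attention: whenever it is nonzero, $C^*_{0+} = g_*(x) = g_*(X^x_0)$, again on the boundary, so the jump contributes nothing to the integral. This yields $\int_0^T e^{-\lambda t}\mathds{1}_{\{(C^*_t, X^x_t) \in \mathcal{C}\}}\,d\nu^*_t = 0$ $\PP$-almost surely.

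The main technical point is the measure-theoretic assertion that $d\nu^*$ is carried by $\Lambda$; I would prove it by the short open-interval argument above using continuity of $m$ and $X^x$, or alternatively by invoking the classical construction of the Skorokhod reflection map, following the references \cite{KS}, p.~210, and \cite{Skorokhod} already cited in the paper.
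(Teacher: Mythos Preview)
Your proposal is correct and follows essentially the same route as the paper's proof. For property~(1) both arguments use $m_t\le X^x_t$ together with the monotonicity of $g_*$ to get $C^*_t=\max\{c,g_*(m_t)\}\ge g_*(X^x_t)$. For property~(2) the paper argues the contrapositive of your support characterisation: assuming $(C^*_t,X^x_t)\in\mathcal{C}$ it splits into the two cases $g_*(m_t)\ge c$ and $g_*(m_t)<c$ (which are exactly the two mechanisms you identify, $X^x_t>m_t$ and $g_*(m_t)\le c$) and shows that $\nu^*$ is constant on a right neighbourhood $[t,t+\epsilon)$. Your formulation via the carrier set $\Lambda=\{t:X^x_t=m_t,\ g_*(m_t)>c\}$ packages the same two observations more compactly; the only minor point to tighten is that the locus $\{c=g_*(x)\}$ is not literally disjoint from $\mathcal{C}$ on the flat pieces of $g_*$, but for $t\in\Lambda$ one always has $C^*_t=g_*(m_t)>0$ and $X^x_t=m_t\le\beta_*(C^*_t)$, so the indicator still vanishes there.
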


\begin{theorem}\label{theorem8}
The control $\nu^*$ defined in \eqref{candidateoptimalcontrol} is optimal for problem \eqref{valuefunction} and $F \equiv U$ (cf.~\eqref{def-F}).
\end{theorem}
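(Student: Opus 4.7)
The plan is a standard verification argument, using the HJB regularity of $F$ from Proposition \ref{prop6} and the Skorokhod characterisation of $\nu^*$ from Proposition \ref{C-opt}. Fix $(x,c)\in\RR\times[0,1]$ and an arbitrary admissible $\nu\in\cS_c$. Since $F\in C^{2,1}(\RR\times[0,1])$ with bounded first and second derivatives in $x$ (Proposition \ref{listprops}), I would apply a generalised It\^o formula to $e^{-\lambda t}F(X^x_t,C^{c,\nu}_t)$. Writing $\nu=\nu^c+\nu^d$ for its continuous and pure-jump parts and using that $C^{c,\nu}_t-C^{c,\nu}_{s-}=\Delta\nu_s$ has no diffusive component,
\begin{align*}
e^{-\lambda t}F(X^x_t,C^{c,\nu}_t)=&\,F(x,c)+\int_0^t e^{-\lambda s}(\mathbb{L}_X-\lambda)F(X^x_s,C^{c,\nu}_s)\,ds+M_t\\
&+\int_0^t e^{-\lambda s}F_c(X^x_s,C^{c,\nu}_s)\,d\nu^c_s+\sum_{0\le s\le t}e^{-\lambda s}\bigl[F(X^x_s,C^{c,\nu}_s)-F(X^x_s,C^{c,\nu}_{s-})\bigr],
\end{align*}
where $M_t:=\int_0^t\sigma e^{-\lambda s}F_x(X^x_s,C^{c,\nu}_s)\,dB_s$ is a true martingale by the bound on $F_x$.

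Next, I would use the two HJB inequalities from Proposition \ref{prop6}. The first, $(\mathbb{L}_X-\lambda)F\ge -\lambda x\Phi(c)$, controls the Lebesgue integral. For the control increments, $F_c(x,c)\ge -x$ handles the continuous part directly, while the jump part is dealt with by writing $F(X^x_s,C^{c,\nu}_s)-F(X^x_s,C^{c,\nu}_{s-})=\int_{C^{c,\nu}_{s-}}^{C^{c,\nu}_s}F_c(X^x_s,y)\,dy\ge -X^x_s\,\Delta\nu_s$. Rearranging and taking expectations, having first localised by a reducing sequence of stopping times and then removed the localisation using the $L^2$ uniform integrability of $(I^{x,c,\nu}_t)$ and $(J^{x,\nu}_t)$ already invoked before Proposition \ref{discretionarystopping}, one obtains
\[
F(x,c)\le \EE\!\left[\int_0^t e^{-\lambda s}\lambda X^x_s\Phi(C^{c,\nu}_s)\,ds+\int_0^t e^{-\lambda s}X^x_s\,d\nu_s\right]+\EE\bigl[e^{-\lambda t}F(X^x_t,C^{c,\nu}_t)\bigr].
\]
Passing to the limit $t\to\infty$ using the linear growth bound $|F(x,c)|\le C_1(1+|x|)$ together with standard $L^2$ estimates on the Ornstein--Uhlenbeck process shows that the last term vanishes, yielding $F(x,c)\le\cJ_{x,c}(\nu)$. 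Taking the infimum over $\nu\in\cS_c$ gives $F\le U$.

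For the reverse inequality I would specialise to $\nu^*$. Proposition \ref{C-opt}(i) places $(X^x_t,C^*_t)$ in $\overline{\cC}$ almost surely, and on the interior of $\cC$ the first HJB inequality holds with equality, i.e.\ $(\mathbb{L}_X-\lambda)F(x,c)=-\lambda x\Phi(c)$; since the boundary $\{c=g_*(x)\}$ is swept in time by a semimartingale with zero Lebesgue measure in the $ds$-integral (a consequence of the occupation time formula and the continuity of $F_{xx}$), this equality may be used under the integral. Part (ii) of Proposition \ref{C-opt}, together with the fact that $\nu^*$ is continuous for $t>0$ (Proposition \ref{admissiblenustar}) so the jump sum collapses to the possible initial jump at $t=0$ where by construction $F_c=-x$ on the upward push, yields $F_c(X^x_s,C^*_s)=-X^x_s$ whenever $d\nu^*_s$ charges. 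Both inequalities in the It\^o expansion therefore become equalities for $\nu=\nu^*$, giving $F(x,c)=\cJ_{x,c}(\nu^*)\ge U(x,c)$. Combined with $F\le U$, this proves $F\equiv U$ and the optimality of $\nu^*$.

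The delicate points I expect to work hardest on are: (a) the rigorous passage from localised to global It\^o identities, which requires a uniform integrability argument for $e^{-\lambda t}F(X^x_t,C^{c,\nu}_t)$ as $t\to\infty$ relying on the linear growth of $F$ in $x$ and the exponentially ergodic behaviour of $X^x$; (b) the treatment of the initial jump of $\nu^*$ when $c<g_*(x)$, where $\nu^*_{0+}=g_*(x)-c$, for which the identity $\int_c^{g_*(x)}F_c(x,y)\,dy=-x(g_*(x)-c)$ follows from $F_c(x,y)=-x$ for all $y\in[c,g_*(x)]$ by definition of $g_*$ and $\DD$; and (c) verifying that the singular measure $d\nu^*$ does not charge the interior of $\cC$, which is precisely Proposition \ref{C-opt}(ii). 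Once these are in place, the verification closes cleanly.
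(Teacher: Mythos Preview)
Your proposal is correct and follows essentially the same verification argument as the paper: It\^o's formula applied to $e^{-\lambda t}F(X^x_t,C^{c,\nu}_t)$, the HJB inequalities of Proposition~\ref{prop6} for the lower bound $F\le U$, and the Skorokhod characterisation of Proposition~\ref{C-opt} for the equality with $\nu^*$. One remark: your point (c) about an occupation-time argument on the boundary $\{c=g_*(x)\}$ is unnecessary, since $F\in C^{2,1}$ (Proposition~\ref{listprops}) makes $(\mathbb{L}_X-\lambda)F+\lambda x\Phi(c)$ continuous, so the equality established on $\{c>g_*(x)\}$ extends to $\overline{\CC}$ by continuity; the paper simply invokes Propositions~\ref{prop6} and~\ref{C-opt} together without this detour.
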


\begin{proof}
The proof is based on a verification argument and, as usual, it splits into two steps.\vspace{+8pt}

{\em Step 1.}
Fix $(x,c)\in\RR\times[0,1]$ and take $R>0$. Set $\tau_{R}:=\inf\big\{t\ge 0\,:\,X^x_t\notin(-R,R)\big\}$, take an admissible control $\nu$, and recall the regularity results for $F$ of Proposition \ref{listprops}. Then we can use It\^o's formula in its classical form up to the stopping time $\tau_R \wedge T$, for some $T>0$, to obtain
\begin{align*}
F(x,c) =& \EE\left[e^{-\lambda(\tau_{R}\wedge T)}F(X_{\tau_{R}\wedge T}^x, {C}_{\tau_{R}\wedge T}^{c,\nu})\right]-\EE\bigg[\int_0^{\tau_{R}\wedge T}e^{-\lambda s}(\mathbb{L}_X-\lambda)
F(X^x_s,{C}^{c,\nu}_s)ds\bigg] \\
&  - \EE\bigg[\int_0^{\tau_{R}\wedge T}e^{-\lambda s}F_c(X^x_s,{C}^{c,\nu}_s)d\nu_s\bigg]
\\ &  - \EE\left[\sum_{0\leq s < \tau_{R}\wedge T}e^{-\lambda s}
\left(F(X^x_s,{C}^{c,\nu}_{s+})-F(X^x_s,{C}^{c,\nu}_s)-F_c(X^x_s,{C}^{c,\nu}_s)\Delta \nu_s\right)\right]
\end{align*}
where $\Delta \nu_s := \nu_{s+}-\nu_s$ and the expectation of the stochastic integral vanishes since $F_x$ is bounded on $(x,c)\in[-R,R]\times[0,1]$.

Now, recalling that any $\nu \in \mathcal{S}_c$ can be decomposed into the sum of its continuous part and of its pure jump part, i.e.\ $d\nu=d\nu^{cont}+ \Delta \nu$, one has (see \cite{FlemingSoner}, Chapter 8, Section VIII.4, Theorem 4.1 at pp.\ 301-302)
\begin{align*}
F(x,c) = & \EE\left[e^{-\lambda(\tau_{R}\wedge T)}F(X_{\tau_{R}\wedge T}^x, C_{\tau_{R}\wedge T}^{c,\nu})\right]-\EE\bigg[\int_0^{\tau_{R}\wedge T}e^{-\lambda s}(\mathbb{L}_X-\lambda)F(X^x_s,C^{c,\nu}_s)ds\bigg] \\
&  - \EE\bigg[\int_0^{\tau_{R}\wedge T}e^{-\lambda s}F_c(X^x_s,C^{c,\nu}_s)d\nu_s^{cont} - \sum_{0\le s < \tau_{R}\wedge T}e^{-\lambda s}
\left(F(X^x_s,C^{c,\nu}_{s+})-F(X^x_s,C^{c,\nu}_s)\right)\bigg].
\end{align*}
Since $F$ satisfies the HJB equation \eqref{HJB-U} (cf.\ Proposition \ref{prop6}) and by noticing that
\begin{equation}
\label{jump}
F(X^x_s,C^{c,\nu}_{s+})-F(X^x_s,C^{c,\nu}_{s})=
\int_0^{\Delta \nu_s} F_c(X^x_s,C^{c,\nu}_{s}+u)du,
\end{equation}
we obtain
\begin{align}
\label{verif04}
F(x,c) \leq &\EE\left[e^{-\lambda(\tau_{R}\wedge T)}F(X_{\tau_{R}\wedge T}^x, C_{\tau_{R}\wedge T}^{c,\nu})\right]+\EE\bigg[\int_0^{\tau_{R}\wedge T}e^{-\lambda s}\lambda X^x_s \Phi(C^{c,\nu}_s)ds\bigg]\nonumber \\
& + \EE\bigg[\int_0^{\tau_{R}\wedge T}e^{-\lambda s} X^x_s d\nu_s^{cont}\bigg]
 + \EE\left[\sum_{0\leq s < \tau_{R}\wedge T}e^{-\lambda s}
X^x_s \Delta \nu_s \right] \\
 = &\EE\bigg[e^{-\lambda(\tau_{R}\wedge T)}F(X_{\tau_{R}\wedge T}^x, C_{\tau_{R}\wedge T}^{c,\nu})+\int_0^{\tau_{R}\wedge T}e^{-\lambda s}
\lambda X^x_s \Phi(C^{c,\nu}_s)ds + \int_0^{\tau_{R}\wedge T}e^{-\lambda s}X^x_s d\nu_s\bigg].\nonumber
\end{align}

When taking limits as $R\to\infty$ we have $\tau_{R}\wedge T \rightarrow T$, $\PP$-a.s. The integral terms in the last expression on the right-hand side of \eqref{verif04} are uniformly integrable (cf.~\eqref{eq:IJ}) and $F$ has sub-linear growth (cf.~\eqref{boundsF}). Then we also take limits as $T\uparrow \infty$ and it follows
\begin{align}
F(x,c) \leq \EE\bigg[\int_0^{\infty}e^{-\lambda s}\lambda X^x_s \Phi(C^{c,\nu}_s)ds + \int_0^{\infty}e^{-\lambda s} X^x_s d\nu_s \bigg],
\end{align}
due to the fact that $\lim_{T \rightarrow \infty}\EE[e^{-\lambda T}F(X^x_T,C^{c,\nu}_T)]=0$.
Since the latter holds for all admissible $\nu$ we have $F(x,c) \leq U(x,c)$.
\vspace{+8pt}

{\em Step 2.} If $c=1$ then $F(x,1)=U(x,1)=0$. Take then $c\in[0,1)$, $C^*$ as in Proposition \ref{C-opt} and define $\rho:=\inf\big\{t\ge 0\,:\,\nu^*_t = 1-c\big\}$. We can repeat arguments of {Step 1.}~on It\^o's formula with $\tau_R$ replaced by $\tau_R\wedge\rho$ to find
\begin{align*}
F(x,c) =& \EE\left[e^{-\lambda\,(\tau_R\wedge\rho)}F(X_{\tau_R\wedge\rho}^x, C^*_{\tau_R\wedge\rho})\right]-\EE\bigg[\int_0^{\tau_R\wedge\rho}e^{-\lambda s}(\mathbb{L}_X-\lambda)
F(X^x_s,C^*_s)ds\bigg] \\
&  - \EE\bigg[\int_0^{\tau_R\wedge\rho}e^{-\lambda s}F_c(X^x_s,C^*_s)d\nu_s^{*,cont}\bigg]\\
& - \EE\left[\sum_{0\le s < \tau_R\wedge\rho}e^{-\lambda s}
\left(F(X^x_s,C^*_{s+})-F(X^x_s,C^*_s)\right)\right].
\end{align*}
If we now recall Proposition \ref{prop6}, Proposition \ref{C-opt} and \eqref{jump}, then from the above we obtain
\begin{align}
\label{verif07}
F(x,c) =& \EE\bigg[e^{-\lambda\,(\tau_R\wedge\rho)}F(X_{\tau_R\wedge\rho}^x, C^*_{\tau_R\wedge\rho})+\int_0^{\tau_R\wedge\rho}e^{-\lambda s}
\lambda X^x_s \Phi(C^*_s)ds + \int_0^{\tau_R\wedge\rho}e^{-\lambda s}X^x_s d\nu^*_s\bigg]
\end{align}
As $R\to\infty$, again $\tau_R\to\infty$, clearly $\tau_R\wedge\rho\to \rho$, $\PP$-a.s.\ and $\EE\left[e^{-\lambda(\tau_R\wedge\rho)}F(X_{\tau_R\wedge\rho}^x, C^*_{\tau_R\wedge\rho})\right]\to 0$. Moreover, we also notice that since $d\,\nu^*_s\equiv0$ and $\Phi(C^*_s)\equiv0$ for $s>\rho$ the integrals in the last expression of \eqref{verif07} may be extended beyond $\rho$ up to $+\infty$ so as to obtain
\begin{align}
\label{verif08}
F(x,c) =& \EE\bigg[\int_0^{\infty}e^{-\lambda s}
\lambda X^x_s \Phi(C^*_s)ds + \int_0^{\infty}e^{-\lambda s}X^x_s d\nu^*_s\bigg]=\mathcal{J}_{x;c}(\nu^*).
\end{align}
Then $F \equiv U$ and $\nu^*$ is optimal.
\end{proof}

\begin{figure}[!ht]
\centering
\includegraphics[scale=0.5]{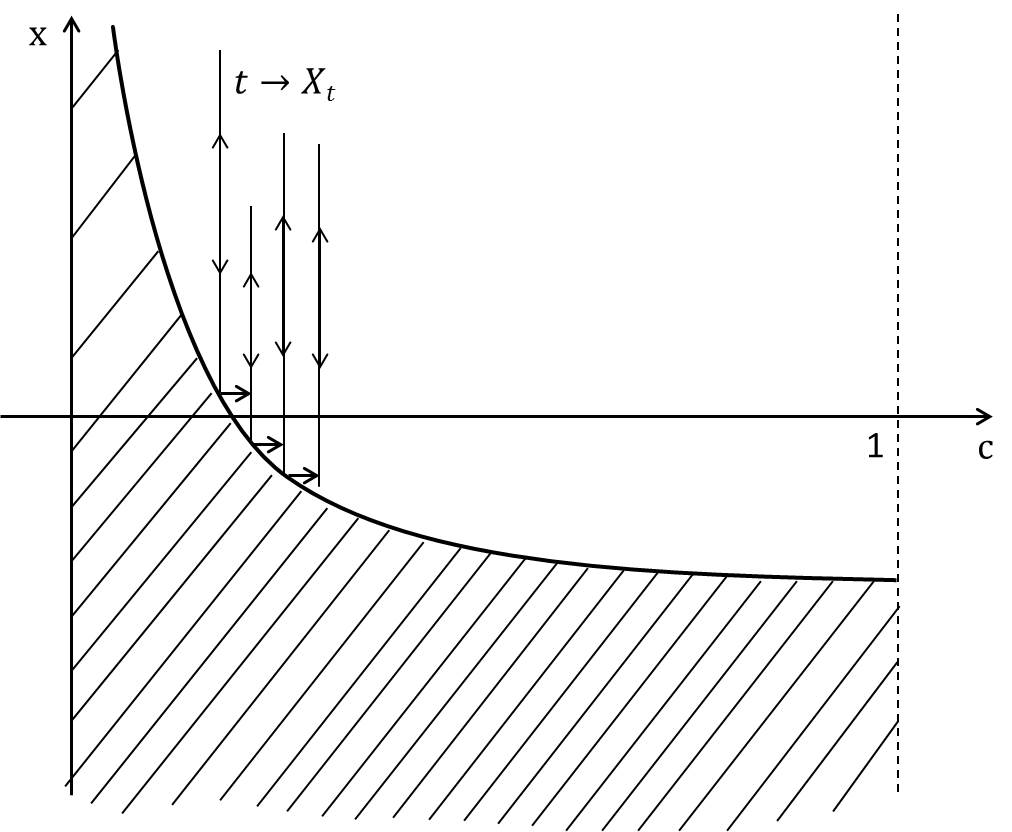}
\caption{\small An illustrative diagram of the action/inaction regions in the case $\hat{c}\le0$ and of the optimal control $\nu^*$ (see \eqref{candidateoptimalcontrol}). The boundary $\beta_*$ splits the state space into the inaction region (white) and action region (hatched). When the initial state is $(x,c)$ with $x>\beta_*(c)$ one observes a Skorokhod reflection of $(X^x,C^{c,\nu^*})$ at $\beta_*$ in the horizontal direction up to when all the fuel is spent.}\label{fig:1}
\end{figure}


\section{The Case $\hat{c} \geq 1$}
\label{Case3}

In this section we examine the opposite regime to that of Section \ref{Case2}, when the infimum in \eqref{SSCDS} is attained by an almost surely finite stopping time $\tau^*$ and the constant control policy $\hat{\nu} \equiv 0$. Equivalently the solution to \eqref{valuefunction} does not exert control before the price process $X$ hits a repelling boundary, at which point all available control is exerted. We show that this regime occurs when $k(c)<0$ for all $c\in[0,1]$ (cf.\ \eqref{def-k}), or equivalently $\hat{c} > 1$. We confirm 
this contrast with the differential relationship holding in Section \ref{Case2} (i.e.~the break-down of the classical connection to optimal stopping) by showing that the principle of smooth fit does not hold for the value function of the control problem, whose second order mixed derivative $U_{cx}$ is not continuous across the optimal boundary. The case $\hat c = 1$ is similar, see Remark \ref{rem:gammalim}.

We begin by observing that exercising no control produces a payoff equal to
\begin{align}\label{heur01}
\lambda\Phi(c)\int^\infty_0{e^{-\lambda s}\EE\left[X^x_s\right]ds}
\end{align}
(cf.~\eqref{valuefunction}). Suppose instead that we exert a small amount $\Delta^0$ of control at time zero and exercise no further control. In this case the cost of control is $x\Delta^0$ and, approximating $\Phi(c+\Delta^0)\sim\Phi(c)+\Phi'(c)\Delta^0$, the payoff reads
\begin{align}\label{heur02}
&\lambda\Phi(c)\int^\infty_0{e^{-\lambda s}\EE\left[X^x_s\right]ds}+\Delta^0\lambda\Phi'(c)\int^\infty_0{e^{-\lambda s}\EE\big[X^x_s\big]ds}+x\Delta^0\nonumber\\
&=\lambda\Phi(c)\int^\infty_0{e^{-\lambda s}\EE\left[X^x_s\right]ds}+\frac{
\Delta^0}{\lambda+\theta}\big(k(c)x+\theta\mu\Phi'(c)\big)
\end{align} recalling that $\EE[X^x_s]=\mu+(x-\mu)e^{-\theta s}$ (cf.~\eqref{OUexplicit}) to obtain the second term. Comparing \eqref{heur01} and \eqref{heur02} we observe that
the relative net contribution to the infimum \eqref{valuefunction} (equivalently, the infimum \eqref{SSCDS}) from exercising  the amount $\Delta^0$ of control is given by the second term in the second line of \eqref{heur02}, which for fixed $c$ depends only on the term $k(c)x$. When $x>-\theta \mu\Phi'(c)/k(c)$ the second term in \eqref{heur02} is negative and therefore favourable, while when
$x<-\theta \mu\Phi'(c)/k(c)$ it is positive and unfavourable. 
This suggests that in the present case, when $\hat{c} > 1$, we should expect the inaction region to correspond to $\{(x,c): x < \gamma(c)\}$ for some function $\gamma$. Moreover, since the curve $c \mapsto -\theta \mu\Phi'(c)/k(c)$ is strictly decreasing as $\Phi$ is strictly convex, small control increments in this profitable region $x>-\theta \mu\Phi'(c)/k(c)$ keep the state process $(X,C)$ inside the same region. It thus follows that infinitesimal increments due to a possible reflecting boundary as in Section \ref{Case2} do not seem to lead to an optimal strategy. Instead a phenomenon similar to `critical depensation' in optimal harvesting models is suggested, where it becomes optimal to exercise all available control upon hitting a repelling free boundary  (see for example \cite{Alvarez98} for one-dimensional problems but note that in our setting the free boundary will in general be non-constant). 

We solve the optimisation problem \eqref{valuefunction} by directly tackling the associated Hamilton-Jacobi-Bellmann equation suggested by the above heuristic and the dynamic programming principle. 
It is not difficult to show from \eqref{valuefunction} that $x\mapsto U(x,c)$ has at most sub-linear growth:
indeed, integrating by parts the cost term $\int^{\infty}_0 {e^{-\lambda s}X^x_s\,d{\nu}_s}$ and noting that the martingale $M_t := \int_0^{t}\sigma e^{-\lambda s}\nu_s dB_s$ is uniformly integrable, we can write for any $\nu \in \mathcal{S}_c$
\begin{align*}
\mathcal{J}_{x,c}(\nu)  \leq \EE\bigg[\int_0^{\infty}e^{-\lambda s}\Big(\lambda |X^x_{s}|\Phi(
C^{c,\nu}_{s}) + |\nu_s| [\lambda|X^x_s| + \theta(\mu + |X^x_s|)]\Big) ds\bigg]   \leq   K(1 + |x|),
\end{align*}
for some suitable $K>0$, by \eqref{OUexplicit}, Assumption \ref{ass-Phi} and the fact that any admissible $\nu$ is nonnegative and uniformly bounded.

We seek a couple $(W,\gamma)$ solving the following system
\begin{align}
\label{HJB-case3}
\left\{
\begin{array}{ll}
\mathbb{L}_X W(x,c)-\lambda W(x,c)=-\lambda\,x\,\Phi(c), & \text{for $x<\gamma(c)$, $c\in[0,1)$},\\[+3pt]
W_c(x,c)\ge -x, &\textrm{for $(x,c)\in\mathbb{R}\times[0,1]$},\\[+3pt]
W(x,c)=x(1-c), & \text{for $x\ge \gamma(c)$, $c\in[0,1]$},\\[+3pt]
W_x(\gamma(c),c)=(1-c), & \text{for $c\in[0,1]$}.
\end{array}
\right.
\end{align}
We will verify a posteriori that $W$ then also satisfies $W_c(\gamma(c),c)=-\gamma(c)$ but does not satisfy $W_{cx}(\gamma(c),c)=-1$, which is a smooth fit condition often employed in the solution of singular stochastic control problems (see, for example, \cite{FedericoPham} and \cite{MehriZervos}).


\begin{theorem}
\label{thm-HJB}
Let $\psi_\lambda$ be the increasing fundamental solution of $(\mathbb{L}_X-\lambda)f=0$ (cf.~\eqref{psi} in Appendix) and define
\begin{align}
\label{over-x0}
\overline{x}_0(c):= \frac{\theta\mu\Phi(c)}{\zeta(c)}, \quad c \in [0,1),
\end{align}
where $\zeta(c):=(\lambda + \theta)(1-c) - \lambda\Phi(c) = \int^1_c k(y)dy < 0.$
There exists a unique couple $(W, \gamma)$ solving \eqref{HJB-case3} with $W$ satisfying $W\in W^{2,1,\infty}_{loc}(\mathbb{R}\times(0,1))$ and $W_c(\gamma(c),c)=-1$. The function $\gamma$ is decreasing and, if $\hat{c}>1$, it is $C^1$ on $[0,1]$. For each $c\in[0,1]$, $\gamma(c)\in (\overline{x}_0(c),+\infty)$ is the unique solution of
\begin{align}\label{freebeq}
\text{find $x\in\RR$:}\quad\frac{\psi_{\lambda}(x)}{\psi'_{\lambda}(x)}= x - \overline{x}_0(c) .
\end{align}
For $c\in[0,1]$ the function $W$ may be expressed in terms of $\gamma$ as
\begin{align}\label{def-F02}
W(x,c)\hspace{-2pt}=\hspace{-3pt}\left\{\hspace{-4pt}
\begin{array}{ll}
\frac{\psi_{\lambda}(x)}{\psi_{\lambda}(\gamma(c))}\Big[\gamma(c)(1\hspace{-2pt}-\hspace{-2pt}c) \hspace{-2pt}- \hspace{-2pt}\lambda\Phi(c)\hspace{-2pt}
\left(\frac{\gamma(c)-\mu}{\lambda+\theta}\hspace{-2pt}+\hspace{-2pt}\frac{\mu}{\lambda}\right)\hspace{-2pt}\Big]\hspace{-2pt} +\hspace{-2pt}\lambda\Phi(c)\hspace{-2pt}
\left[\frac{x-\mu}{\lambda+\theta}\hspace{-2pt}+\hspace{-2pt}\frac{\mu}
{\lambda}\right], & \text{for $x<{\gamma(c)}$,} \\[+10pt]
\displaystyle x(1-c), & \text{for $x\ge \gamma(c)$. }
\end{array}
\right.
\end{align}
Moreover the map $x\mapsto W_c(x,c)$ is not $C^1$ across the boundary $\gamma$ and one has $W_{cx}(\gamma(c),c) < -1$, $c\in[0,1]$.
\end{theorem}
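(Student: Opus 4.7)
The plan is to derive $(W,\gamma)$ from a free-boundary ansatz and then verify the inequality constraint. In $\{x<\gamma(c)\}$ the PDE $(\mathbb{L}_X-\lambda)W=-\lambda x\Phi(c)$ is an inhomogeneous linear second-order ODE in $x$ whose general solution combines $\psi_\lambda,\phi_\lambda$ with the affine particular solution $\lambda\Phi(c)\bigl[\tfrac{x-\mu}{\lambda+\theta}+\tfrac{\mu}{\lambda}\bigr]$. The sub-linear growth of $U$, together with the exponential blow-up of $\phi_\lambda$ at $-\infty$, forces the $\phi_\lambda$ coefficient to vanish. Imposing $W(\gamma(c),c)=\gamma(c)(1-c)$ and the first-order smooth fit $W_x(\gamma(c),c)=1-c$ on the resulting one-parameter ansatz determines $A(c)=\zeta(c)/[(\lambda+\theta)\psi'_\lambda(\gamma(c))]$ and, after eliminating $A$, produces exactly~\eqref{freebeq}; substitution back gives~\eqref{def-F02}.

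To handle the free boundary I would study $\tilde{h}(x;c):=\psi_\lambda(x)-(x-\overline{x}_0(c))\psi'_\lambda(x)$, whose derivative $\tilde{h}'(x)=-(x-\overline{x}_0(c))\psi''_\lambda(x)$ is strictly positive on $(-\infty,\overline{x}_0(c))$ and strictly negative on $(\overline{x}_0(c),+\infty)$ by the strict convexity of $\psi_\lambda$. Combined with $\tilde{h}(\overline{x}_0(c);c)=\psi_\lambda(\overline{x}_0(c))>0$ and $\tilde{h}(x;c)\to-\infty$ as $x\to+\infty$ (since $\psi_\lambda/\psi'_\lambda\to 0$ for the OU process), this yields a unique root $\gamma(c)\in(\overline{x}_0(c),+\infty)$. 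Implicit differentiation of~\eqref{freebeq}, together with $\zeta(c)<0$, $-k(c)>0$ and $\psi''_\lambda>0$, then delivers $\gamma'(c)<0$ and the claimed $C^1$ regularity on $[0,1]$ whenever $\hat{c}>1$.

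The verification step is the main obstacle. The ODE, value matching, and first-order smooth fit are built in, and differentiating the value-matching identity in $c$ and using $W_x(\gamma(c),c)=1-c$ immediately gives $W_c(\gamma(c)^-,c)=-\gamma(c)$. To quantify the failure of mixed smooth fit I would differentiate the continuation-region formula to obtain $W_{cx}(x,c)=A'(c)\psi'_\lambda(x)+\lambda\Phi'(c)/(\lambda+\theta)$; substituting $A'(c)$ from differentiation of $A$ and using the identity $-k(c)+\lambda\Phi'(c)=-(\lambda+\theta)$ yields
\begin{equation*}
W_{cx}(\gamma(c)^-,c)=-1-\frac{\zeta(c)\psi''_\lambda(\gamma(c))\gamma'(c)}{(\lambda+\theta)\psi'_\lambda(\gamma(c))}.
\end{equation*}
Because $\zeta(c)<0$, $\psi''_\lambda>0$, $\psi'_\lambda>0$ and $\gamma'(c)<0$, the correction term is strictly positive, so $W_{cx}(\gamma(c)^-,c)<-1$; on the action side $W=x(1-c)$ gives $W_{cx}=-1$, confirming the jump.

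Finally I would leverage this strict inequality to prove the remaining constraint $W_c\geq-x$. Set $\rho(x,c):=W_c(x,c)+x$, so that $\rho(\gamma(c)^-,c)=0$ and $\rho_x(\gamma(c)^-,c)=W_{cx}(\gamma(c)^-,c)+1<0$. On the continuation region $\rho_x(x,c)=A'(c)\psi'_\lambda(x)+k(c)/(\lambda+\theta)$ is monotone in $x$ by the (strict) monotonicity of $\psi'_\lambda$, while $\rho_x(x,c)\to k(c)/(\lambda+\theta)<0$ as $x\to-\infty$. Combining monotonicity with negativity of $\rho_x$ at both endpoints of $(-\infty,\gamma(c))$ forces $\rho_x<0$ throughout the interval (regardless of the sign of $A'(c)$), so $\rho$ is strictly decreasing and therefore strictly positive on $(-\infty,\gamma(c))$; equality holds on $\{x\geq\gamma(c)\}$ where $W=x(1-c)$. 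The regularity $W\in W^{2,1,\infty}_{loc}(\mathbb{R}\times(0,1))$ then follows from piecewise smoothness together with $\gamma\in C^1$. The hardest part of the argument is thus the sign of $W_{cx}(\gamma(c)^-,c)+1$: once this strict inequality is secured, everything else — including the global constraint $W_c\geq -x$ and the breakdown of the classical smooth-fit principle — follows.
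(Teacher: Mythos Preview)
Your construction of $(W,\gamma)$ and the analysis of $\tilde h$ are essentially the paper's Step~1, and your derivation of $W_{cx}(\gamma(c)^-,c)=-1-\zeta(c)\psi''_\lambda(\gamma(c))\gamma'(c)/[(\lambda+\theta)\psi'_\lambda(\gamma(c))]$ is correct. However, there is a gap in your monotonicity step: the ingredients you list ($\zeta(c)<0$, $-k(c)>0$, $\psi''_\lambda>0$) do \emph{not} by themselves yield $\gamma'(c)<0$. Implicit differentiation of \eqref{freebeq} gives $\gamma'(c)$ the same sign as $\overline{x}_0'(c)$, and a short computation shows $\overline{x}_0'(c)=\theta\mu(\lambda+\theta)[\Phi'(c)(1-c)+\Phi(c)]/\zeta(c)^2$. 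The sign therefore hinges on the strict-convexity inequality $\Phi'(c)(1-c)+\Phi(c)<0$ (equivalently $-\Phi(c)=\int_c^1\Phi'(y)\,dy>\Phi'(c)(1-c)$), which you never invoke. Since your formula for $W_{cx}(\gamma(c)^-,c)+1$ is written in terms of $\gamma'(c)$, this omission propagates: without the convexity inequality you cannot conclude $W_{cx}(\gamma(c)^-,c)<-1$ either. The paper proves both facts by computing each quantity \emph{directly} in terms of $\Phi'(c)(1-c)+\Phi(c)$, making the role of convexity explicit.

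Once that gap is filled, your verification of $W_c\ge -x$ is a genuinely different (and more elementary) route than the paper's. The paper differentiates the PDE to obtain $(\mathbb{L}_X-(\lambda+\theta))\bar u=-k(c)\ge 0$ for $\bar u=W_{cx}+1$, then uses the Feynman--Kac representation up to the hitting time of $\gamma(c)$ and the boundary inequality $\bar u(\gamma(c),c)<0$ to conclude $\bar u<0$ throughout. You instead observe that $\rho_x=A'(c)\psi'_\lambda(x)+k(c)/(\lambda+\theta)$ is monotone in $x$ (since $\psi''_\lambda>0$) and negative at both ends of $(-\infty,\gamma(c))$, hence negative everywhere there; integrating from $\gamma(c)$ gives $\rho>0$. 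Your argument avoids stochastic representations entirely and is shorter; the paper's argument, on the other hand, does not require knowing the sign of $A'(c)$ or the limiting behaviour of $\psi'_\lambda$ at $-\infty$, and generalises more readily to settings where an explicit $\psi_\lambda$ is unavailable.
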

\begin{proof}
The proof will be carried out in several steps.
\vspace{+8pt}

\emph{Step 1.} The first equation in \eqref{HJB-case3} is an ordinary differential equation solved by
\begin{align}
\label{candF01}
W(x,c)=A(c)\psi_{\lambda}(x)+B(c)\phi_{\lambda}(x)+\lambda\,\Phi(c)\left[\frac{x-\mu}{\lambda+\theta}+\frac{\mu}
{\lambda}\right],
\end{align}
with $\phi_{\lambda}$ and $\psi_{\lambda}$ as in \eqref{phi} and \eqref{psi}, respectively. Since $W(x,c)=x(1-c)$ for $x>\gamma(c)$ sub-linear growth is fulfilled as $x\to+\infty$; however, as $x\to-\infty$ one has that $\phi_{\lambda}(x)\to+\infty$ with a super-linear trend. Since we are trying to identify $U$, it is then natural to set $B(c)\equiv0$. Imposing the third and fourth conditions of \eqref{HJB-case3} at $x=\gamma(c)$ we find
\begin{align}\label{Ac}
A(c)\psi_{\lambda}(\gamma(c))= 
\gamma(c)(1-c) - \lambda\,\Phi(c)
\left(\frac{\gamma(c)-\mu}{\lambda+\theta}+\frac{\mu}{\lambda}\right)
\end{align}
and
\begin{align}\label{Ac2}
\quad A(c)\psi'_{\lambda}(\gamma(c))= (1-c) - \frac{\lambda\Phi(c)}{\lambda+\theta}.
\end{align}
from which it
follows that $\gamma(c)$ should solve \eqref{freebeq}. Since $\psi_{\lambda}/\psi'_{\lambda} > 0$ any solution of \eqref{freebeq} must be in the set $(\overline{x}_0(c),+\infty)$ and so \eqref{freebeq} is equivalent to
finding $x\in(\overline{x}_0(c),+\infty)$ such that $\overline{H}(x,c)=0$ with
\begin{align}
\label{def-H03}
\overline{H}(x,c):=\psi_{\lambda}(x)\Big[(1-c) - \frac{\lambda\Phi(c)}{\lambda+\theta}\Big]-\psi'_{\lambda}(x)\Big[x(1-c) - \lambda\,\Phi(c)
\left(\frac{x-\mu}{\lambda+\theta}+\frac{\mu}{\lambda}\right)\Big].
\end{align}
Since $\psi'_{\lambda}>0$ and $\psi''_{\lambda}>0$ (cf.\ \eqref{psi} and \eqref{cylinder}) it follows by direct calculation that $\overline{H}_x(x,c)>0$ and $\overline{H}_{xx}(x,c)>0$ on $x\in(\overline{x}_0(c),+\infty)$; moreover, since $\overline{H}(\overline{x}_0(c),c)<0$ there exists a unique $\gamma(c)$ solving \eqref{freebeq}. Now from \eqref{freebeq}, \eqref{Ac} and  \eqref{Ac2} we can equivalently set
\begin{eqnarray}
\label{def-Ac}
A(c) &\hspace{-0.25cm}:= \hspace{-0.25cm} &\frac{1}{\psi_{\lambda}(\gamma(c))}\Big[\gamma(c)(1-c) - \lambda\,\Phi(c)
\left(\frac{\gamma(c)-\mu}{\lambda+\theta}+\frac{\mu}{\lambda}\right)\Big] \nonumber \\
&\hspace{-0.25cm} = \hspace{-0.25cm}&\frac{1}{\psi'_{\lambda}(\gamma(c))}\Big[(1-c) - \frac{\lambda\Phi(c)}{\lambda+\theta}\Big]
\end{eqnarray}
and \eqref{def-F02} follows by extending $W$ to be $x(1-c)$ for $x>\gamma(c)$.
\vspace{+8pt}

\emph{Step 2.} Using \eqref{freebeq} and \eqref{def-F02} it is easy to check that $W(\gamma(c),c)=\gamma(c)(1-c)$ and $W_x(\gamma(c),c)=(1-c)$.
\vspace{+8pt}

\emph{Step 3.} In order to establish the monotonicity of $\gamma$ we study the derivative with respect to $c$ of the map $c\mapsto x - \overline{x}_0(c)$. Differentiating we obtain
\begin{align}\label{monot01}
\frac{d}{d\,c}(x - \overline{x}_0(c))= - \frac{d}{d\,c}\overline{x}_0(c) =
-\frac{\theta\mu(\lambda+\theta)[\Phi'(c)(1-c) + \Phi(c)]}{\zeta^2(c)} > 0,
\end{align}
where the last inequality holds since $-\Phi(c)=\int^1_c{\Phi'(y)dy}>\Phi'(c)(1-c)$ by strict convexity of $\Phi$. Now \eqref{monot01} guarantees that $c\mapsto x - \overline{x}_0(c)$ is increasing and then the implicit function theorem and arguments similar to those that led to \eqref{monotbeta} in the proof of Proposition \ref{bstar01} allow us to conclude that $\gamma$ lies in $C^1([0,1])$ (if $\hat{c}>1$) and is decreasing (for $\hat{c}=1$ see Remark \ref{rem:gammalim} below).
\vspace{+8pt}

\emph{Step 4.} We now aim to prove the second condition in \eqref{HJB-case3}. Recalling that $W$ has been extended to be $x(1-c)$ for $x\ge \gamma(c)$ the result is trivial in that region. Consider only $x<\gamma(c)$. From \eqref{def-F02} we have
\begin{eqnarray}
\label{Fc-cont01}
& & W(x,c)=x(1-c)- \Big[x(1-c) - \lambda\,\Phi(c)\left(\frac{x-\mu}{\lambda+\theta}+\frac{\mu}{\lambda}\right)\Big] \nonumber \\
& & \hspace{1.8cm} +\frac{\psi_{\lambda}(x)}{\psi_{\lambda}(\gamma(c))}\Big[\gamma(c)(1-c) - \lambda\,\Phi(c)
\left(\frac{\gamma(c)-\mu}{\lambda+\theta}+\frac{\mu}{\lambda}\right)\Big]
\end{eqnarray}
and since $\gamma$ is differentiable, recalling \eqref{Gexpression} and rearranging terms we have
\begin{align}
\label{Fc-cont02}
W_c(x,c)=&-x+G(x,c)-\frac{\psi_{\lambda}(x)}{\psi_{\lambda}(\gamma(c))}\,G(\gamma(c),c)\nonumber\\
&+\frac{\psi_{\lambda}(x)}{\psi_{\lambda}(\gamma(c))}\,\gamma'(c)\,\Big[(1-c) -\frac{\lambda\Phi(c)}{\lambda+\theta}\Big]\Big[1 - \frac{\psi_{\lambda}'(\gamma(c))}{\psi_{\lambda}(\gamma(c))}(\gamma(c) - \overline{x}_0(c))\Big] \\
=&-x+G(x,c)-\frac{\psi_{\lambda}(x)}{\psi_{\lambda}(\gamma(c))}\,G(\gamma(c),c),\nonumber
\end{align}
where the last equality follows since $\gamma$ solves \eqref{freebeq}. Note that as a byproduct of \eqref{Fc-cont02} we also have $W_c(\gamma(c),c)=-\gamma(c)$.
Differentiating \eqref{Fc-cont02} with respect to $x$ and taking $x=\gamma(c)$ gives
\begin{align}\label{Fc-cont03}
W_{cx}(\gamma(c),c)+1= \frac{k(c)}{\lambda+\theta} - \frac{\psi'_{\lambda}(\gamma(c))}{\psi_{\lambda}(\gamma(c))}G(\gamma(c),c)
\end{align}
and hence from \eqref{Gexpression} and \eqref{freebeq} we obtain
\begin{eqnarray}
\label{Fc-cont04}
 W_{cx}(\gamma(c),c)+1 &\hspace{-0.25cm} = \hspace{-0.25cm} & - \frac{k(c)}{(\lambda + \theta)}\frac{1}{(\gamma(c) - \overline{x}_0(c))}
\Big[\frac{\mu\theta\Phi'(c)}{k(c)} + \overline{x}_0(c)\Big] \nonumber \\
& \hspace{-0.25cm} = \hspace{-0.25cm} &- \frac{\theta\mu}{\zeta(c)}\frac{1}{(\gamma(c) - \overline{x}_0(c))}[\Phi'(c)(1-c) + \Phi(c)].
\end{eqnarray}
Since $\gamma(c)> \overline{x}_0(c)$, $\zeta(c) < 0$ and $\Phi'(c)(1-c) + \Phi(c)<0$ by the convexity of $\Phi$ we conclude that
\begin{align}\label{Fc-cont05}
W_{cx}(\gamma(c),c)+1<0,\qquad c\in[0,1].
\end{align}

\noindent For $x<\gamma(c)$ we can differentiate with respect to $c$ and $x$ the first equation in \eqref{HJB-case3}, set $\bar{u}(x,c):=W_{xc}(x,c)+1$ and find
\begin{align}
\label{Fc-cont06}
\mathbb{L}_X\bar{u}(x,c)-(\lambda+\theta)\bar{u}(x,c)=-k(c) \geq 0,\quad\text{for $c\in[0,1]$ and $x<\gamma(c)$,}
\end{align}
with boundary condition $\bar{u}(\gamma(c),c)=W_{xc}(\gamma(c),c)+1<0$. Taking $\sigma_\gamma:=\inf\big\{t\ge 0\,:\,X^x_t\ge \gamma(c)\big\}$ and using It\^o's formula we find
\begin{equation}
\label{Fc-cont07}
\bar{u}(x,c)=\EE\left[e^{-(\lambda+\theta)\sigma_\gamma}\bar{u}\big(X^x_{\sigma_\gamma},c\big)+k(c)\int^{\sigma_\gamma}_0
{e^{-(\lambda+\theta)s}ds}\right],\quad\mbox{for $c\in[0,1]$ and $x<\gamma(c)$}.
\end{equation}
It follows from \eqref{Fc-cont02} and recurrence of $X$ that $e^{-(\lambda+\theta)\sigma_\gamma}\bar{u}\big(X^x_{\sigma_\gamma},c\big)=
e^{-(\lambda+\theta)\sigma_\gamma}\bar{u}\big(\gamma(c),c\big)$, $\PP$-a.s. Moreover, $k(c)<0$ and \eqref{Fc-cont05} imply that the right-hand side of \eqref{Fc-cont07} is strictly negative. It follows that $W_{xc}(x,c)+1<0$ for all $x<\gamma(c)$ and hence $x\mapsto W_c(x,c)+x$ is decreasing. Since $W_c(\gamma(c),c)+\gamma(c)=0$ by \eqref{Fc-cont02}, then we can conclude $W_c(x,c)+x\ge 0$ for all $(x,c)\in\mathbb{R}\times[0,1]$.
\end{proof}
\begin{remark}
\label{smoothfitbreaksdown}
If $W_c$ were the value function of an optimal stopping problem with free boundary $\gamma$ we would expect the principle of smooth fit to hold, i.e.~$W_c(\,\cdot\,,c)\in C^1$ across the boundary $\gamma$. In the literature on singular stochastic control, continuity of $W_{cx}$ is usually verified (cf.~for instance \cite{FedericoPham} and \cite{MehriZervos}) and often used to characterise the optimal boundary. However equation \eqref{Fc-cont05} confirms that this property does not hold in this example, and indeed the differential relationship between singular control and optimal stopping (in the sense, e.g., of \cite{ElKK88}, \cite{KaratzasShreve84}, \cite{K85}) breaks down.
\end{remark}
\begin{remark}
\label{rem:gammalim}
It is interesting to note that if $\hat{c}=1$ one has $\lim_{c\uparrow\hat{c}}\overline{x}_0(c)=-\infty$ and hence $\lim_{c\uparrow\hat{c}}\gamma(c)=-\infty$, otherwise a contradiction is found when passing to the limit in \eqref{freebeq} with $x=\gamma(c)$.
\end{remark}
Since $\gamma$ solving \eqref{freebeq} is the unique candidate optimal boundary we set $\gamma_*:=\gamma$ from now on.
\begin{prop}\label{propHJB-case3}
The function $W$ of Theorem \ref{thm-HJB} solves \eqref{HJB-U} with $W(x,1)=U(x,1)=0$.
\end{prop}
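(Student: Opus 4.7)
The plan is to verify \eqref{HJB-U} by splitting the state space at the boundary $\gamma_*$ and treating the two resulting regions separately, and then to check $W(x,1)=U(x,1)=0$ directly; the interface $\{x=\gamma_*(c)\}$ is a null set and plays no role in the a.e.\ statement. On $\{x<\gamma_*(c)\}$ the first line of \eqref{HJB-case3} gives $-\mathbb{L}_X W+\lambda W-\lambda x\Phi(c)=0$, while $W_c\ge -x$ has already been established in Step~4 of the proof of Theorem~\ref{thm-HJB}, so the max in \eqref{HJB-U} is zero there. On $\{x>\gamma_*(c)\}$ the explicit formula $W(x,c)=x(1-c)$ from \eqref{def-F02} makes $-W_c-x$ vanish identically, and a direct computation gives
$$
\Xi(x,c):=-\mathbb{L}_X W+\lambda W-\lambda x\Phi(c)=x\,\zeta(c)-\theta\mu(1-c).
$$
Since $\zeta(c)<0$ in the present regime, $\Xi(\cdot,c)$ is strictly decreasing in $x$, so it will suffice to show the sign at the boundary, $\Xi(\gamma_*(c),c)\le 0$.

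This boundary sign is the only step requiring care, and the plan is to obtain it by approaching $\gamma_*(c)$ from the left inside the PDE and using the smooth-fit conditions. From \eqref{def-F02} one reads $W_{xx}(x,c)=A(c)\psi_{\lambda}''(x)$ for $x<\gamma_*(c)$, and the representation \eqref{def-Ac} of $A$ gives $A(c)=\zeta(c)/[(\lambda+\theta)\psi_{\lambda}'(\gamma_*(c))]<0$, because $\zeta(c)<0$ and $\psi_{\lambda}'>0$; combined with $\psi_{\lambda}''>0$ this yields $W_{xx}(\gamma_*(c)^-,c)<0$. Plugging the smooth fit $W(\gamma_*(c),c)=\gamma_*(c)(1-c)$ and $W_x(\gamma_*(c),c)=1-c$ into the first equation of \eqref{HJB-case3} evaluated at $x=\gamma_*(c)^-$ gives precisely $\tfrac{1}{2}\sigma^2 W_{xx}(\gamma_*(c)^-,c)=\Xi(\gamma_*(c),c)$, from which $\Xi(\gamma_*(c),c)<0$ follows.

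The identification at $c=1$ will be essentially immediate: setting $c=1$ in \eqref{def-F02} and using $\Phi(1)=0$ makes every summand vanish, so $W(x,1)=0$; and since the admissibility constraint in \eqref{admissiblecontrols} with $c=1$, together with $\nu$ being nondecreasing and $\nu_0=0$, forces $\nu\equiv 0$, the functional $\mathcal{J}_{x,1}(0)$ collapses to $\lambda\Phi(1)\int_0^{\infty} e^{-\lambda s}\EE[X^x_s]\,ds=0$, whence $U(x,1)=0$. The single non-routine ingredient is therefore the sign of $\Xi(\gamma_*(c),c)$; this is the step in which the standing assumption $\hat c\ge 1$ of this section (equivalently $\zeta(c)<0$ on $[0,1)$) enters decisively, through the negativity of the coefficient $A(c)$.
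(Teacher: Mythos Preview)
Your proof is correct, but the argument for the sign of $\Xi(\gamma_*(c),c)$ differs from the paper's. The paper introduces the auxiliary point $\tilde{x}(c):=\theta\mu(1-c)/\zeta(c)$, the unique zero of $\Xi(\cdot,c)$, and proves $\gamma_*(c)>\tilde{x}(c)$ by showing $\overline{H}(\tilde{x}(c),c)<0$ via the bound $\psi_\lambda(\tilde{x})/\psi'_\lambda(\tilde{x})>\theta(\mu-\tilde{x})/\lambda$ that comes from $\psi''_\lambda>0$. You instead identify $\Xi(\gamma_*(c),c)$ directly with $\tfrac{1}{2}\sigma^2 W_{xx}(\gamma_*(c)^-,c)=\tfrac{1}{2}\sigma^2 A(c)\psi''_\lambda(\gamma_*(c))$ by evaluating the PDE at the boundary with smooth fit, and conclude negativity from $A(c)=\zeta(c)/[(\lambda+\theta)\psi'_\lambda(\gamma_*(c))]<0$ and $\psi''_\lambda>0$. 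Both arguments ultimately hinge on the same two facts, $\zeta(c)<0$ and $\psi''_\lambda>0$, but yours is more direct: it avoids the auxiliary $\tilde{x}(c)$ and the algebra of \eqref{test01}--\eqref{test03}, and makes transparent why smooth fit alone forces the required sign. The paper's route, on the other hand, has the minor advantage of locating $\gamma_*(c)$ relative to an explicit threshold.
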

\begin{proof}
The boundary condition at $c=1$ follows from \eqref{def-F02}. Since $W$ solves \eqref{HJB-case3} it also solves \eqref{HJB-U} for $x<\gamma_*(c)$, $c\in[0,1]$. It thus remains to prove that $\big(\mathbb{L}_X-\lambda\big)W(x,c)\ge -\lambda\Phi(c)\,x$ for $x>\gamma_*(c)$. Note that since $W(x,c)=x(1-c)$ in that region then $\big(\mathbb{L}_X-\lambda\big)W(x,c)=(1-c)\big[\theta\mu-(\lambda+\theta)x\big]$. Set
\begin{align}
\label{xtilde}
\tilde{x}(c):=\frac{(1-c)\theta\mu}{\zeta(c)}, \quad c\in [0,1),
\end{align}
where again $\zeta(c)=\int_c^1k(y)dy$, and observe that $(1-c)\big[\theta\mu-(\lambda+\theta)x\big]\ge-\lambda\Phi(c)\,x$ for all $x\ge\tilde{x}(c)$.
To conclude we need only show that $\gamma_*(c)>\tilde{x}(c)$ for $c\in[0,1]$. It suffices to prove that $\overline{H}(\tilde{x}(c),c)<0$ (cf.~\eqref{def-H03}) and the result will follow since $\overline{H}(\cdot,c)$ is strictly increasing and such that $\overline{H}(\gamma_*(c),c)=0$.

Fix $c\in[0,1)$ and denote $\tilde{x}:=\tilde{x}(c)$ and $\overline{x}_0:=\overline{x}_0(c)$ (cf.~\eqref{over-x0}) for simplicity. Then we have
\begin{align}
\label{test01}
\frac{\psi_{\lambda}(\tilde{x})}{\psi'_{\lambda}(\tilde{x})}- (\tilde{x} - \overline{x}_0)
=\frac{\psi_{\lambda}(\tilde{x})}{\psi'_{\lambda}(\tilde{x})} - \frac{\theta\mu}{\zeta(c)}(1-c-\Phi(c))
=\frac{\psi_{\lambda}(\tilde{x})}{\psi'_{\lambda}(\tilde{x})}-\frac{\tilde{x}}
{(1-c)}\left[(1-c)-\Phi(c)\right],
\end{align}
where the last equality follows from \eqref{xtilde}.
Since $\psi''_{\lambda}>0$ and $\psi_{\lambda}$ solves $\big(\mathbb{L}_X-\lambda\big)\psi_{\lambda}=0$ we obtain
\begin{align}\label{test02}
\frac{\psi_{\lambda}(\tilde{x})}{\psi'_{\lambda}(\tilde{x})}>\frac{\theta(\mu-\tilde{x})}{\lambda}
\end{align}
and from the right-hand side of \eqref{test01} also
\begin{align}\label{test03}
\frac{\psi_{\lambda}(\tilde{x})}{\psi'_{\lambda}(\tilde{x})}-(\tilde{x} - \overline{x}_0)>&\frac{\theta(\mu-\tilde{x})}{\lambda}-\frac{\tilde{x}
\left[\lambda(1-c)-\lambda\Phi(c)\right]}
{\lambda(1-c)}\nonumber\\
=&\frac{\big(\theta\mu-(\lambda+\theta)\tilde{x}\big)(1-c)+\lambda\Phi(c)\tilde{x}}{\lambda(1-c)}=0.
\end{align}
The inequality above implies $\overline{H}(\tilde{x}(c),c)<0$ so that $\gamma_*(c)>\tilde{x}(c)$. Hence $\big(\mathbb{L}_X-\lambda\big)W(x,c)\ge -\lambda\Phi(c)\,x$ for $x>\gamma_*(c)$.

\end{proof}

Introduce the stopping time
\beq
\label{optimaltimecase3}
\tau_*:=\inf\big\{t\ge 0\,:\,X^x_t\ge \gamma_*(c)\big\},
\eeq
and for any $c\in[0,1)$ define the admissible control strategy
\begin{align}
\label{op-contr01}
\nu^*_t:=\left\{
\begin{array}{ll}
0, & t\le \tau_*,\\
(1-c), & t>\tau_*.
\end{array}
\right.
\end{align}
\begin{figure}[!ht]
\centering
\includegraphics[scale=0.5]{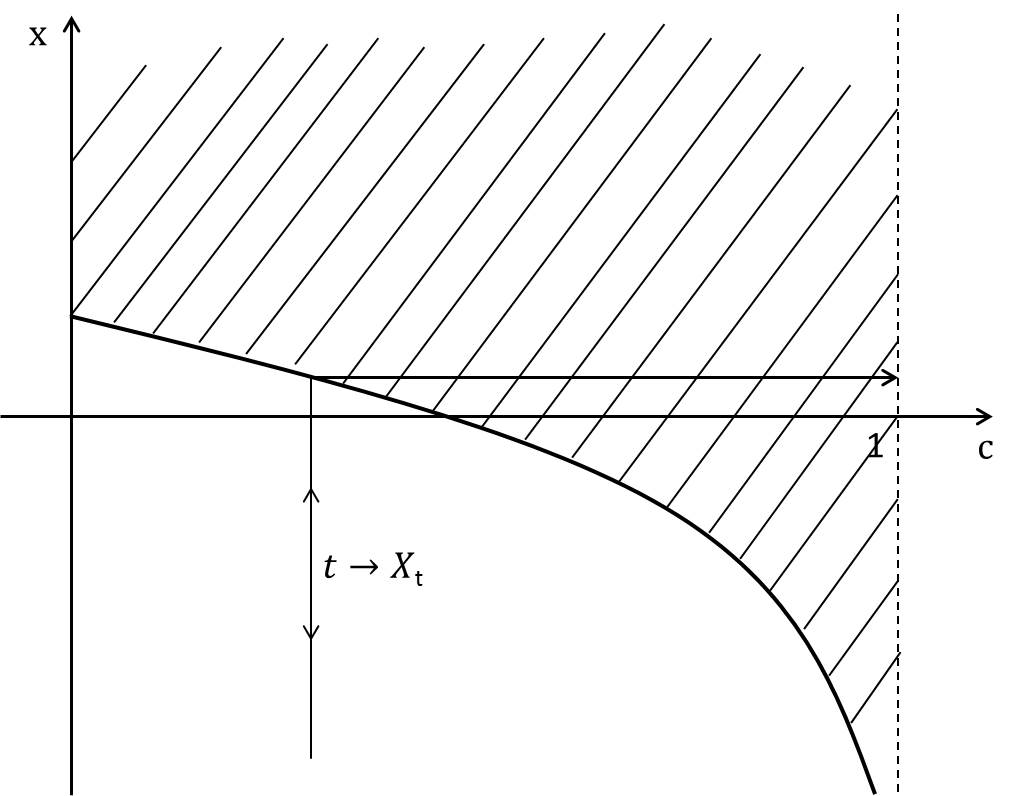}
\caption{\small An illustrative diagram of the action/inaction regions in the case $\hat{c}>0$ and of the optimal control $\nu^*$ (see \eqref{op-contr01}). The boundary $\gamma_*$ splits the state space into the inaction region (white) and action region (hatched). When the initial state is $(x,c)$ with $x<\gamma_*(c)$, the first time $X^x$ hits $\gamma_*(c)$ one observes a single jump of $(X^x,C^{c,\nu^*})$ in the horizontal direction up to $c=1$.}\label{fig:2}
\end{figure}
\begin{theorem}
\label{thm-opt-c}
The admissible control $\nu^*$ of \eqref{op-contr01} is optimal for problem \eqref{valuefunction} and $W\equiv U$.
\end{theorem}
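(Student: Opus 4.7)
The plan is to mirror the two-step verification argument used in the proof of Theorem \ref{theorem8}: first obtain $W \le U$ by applying It\^o's formula along an arbitrary admissible control and using that $W$ satisfies the HJB equation \eqref{HJB-U} (Proposition \ref{propHJB-case3}); then obtain $W \ge U$ by checking equality along $\nu^*$. A preliminary observation is that $W$ inherits sub-linear growth in $x$ uniformly in $c$, since in the inaction region \eqref{def-F02} shows $W$ is dominated by $\lambda\Phi(c)\left[(x-\mu)/(\lambda+\theta)+\mu/\lambda\right]$ plus a term proportional to $\psi_\lambda(x)/\psi_\lambda(\gamma_*(c))\le 1$ (recall $\psi_\lambda$ is increasing), and in the action region $|W(x,c)|=|x|(1-c)$. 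Combined with standard $L^2$ bounds for $X^x$ (Appendix \ref{factsOU}), this will produce the needed uniform integrability. A minor technical point is that $W$ is only in $W^{2,1,\infty}_{loc}$ with a jump of $W_{xx}$ across $\gamma_*$ (while $W_x$ is continuous there by Step 2 of Theorem \ref{thm-HJB}), so It\^o's formula should be invoked in its generalised form for $C^1$ functions with bounded $W^{2,\infty}_{loc}$ regularity; equivalently one mollifies $W$ in $x$ and passes to the limit using dominated convergence.

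\textbf{Step 1 ($W\le U$).} Fix $(x,c)\in\RR\times[0,1)$ and $\nu\in\cS_c$. Let $\tau_R:=\inf\{t:X^x_t\notin(-R,R)\}$ and $T>0$. Applying It\^o's formula to $e^{-\lambda t}W(X^x_t,C^{c,\nu}_t)$ on $[0,\tau_R\wedge T]$, splitting $d\nu=d\nu^{cont}+\Delta\nu$ and writing the jump term via \eqref{jump}, we obtain
\begin{align*}
W(x,c)=&\,\EE\!\left[e^{-\lambda(\tau_R\wedge T)}W\bigl(X^x_{\tau_R\wedge T},C^{c,\nu}_{\tau_R\wedge T}\bigr)\right]-\EE\!\int_0^{\tau_R\wedge T}e^{-\lambda s}(\mathbb{L}_X-\lambda)W(X^x_s,C^{c,\nu}_s)\,ds\\
&-\EE\!\int_0^{\tau_R\wedge T}e^{-\lambda s}W_c(X^x_s,C^{c,\nu}_s)\,d\nu^{cont}_s-\EE\sum_{0\le s<\tau_R\wedge T}e^{-\lambda s}\!\int_0^{\Delta\nu_s}\!W_c(X^x_s,C^{c,\nu}_s+u)\,du.
\end{align*}
The HJB inequalities of \eqref{HJB-U} (Proposition \ref{propHJB-case3}) bound the second line from above by $\EE\!\int_0^{\tau_R\wedge T}e^{-\lambda s}X^x_s\,d\nu^{cont}_s+\EE\sum_{0\le s<\tau_R\wedge T}e^{-\lambda s}X^x_s\Delta\nu_s$ and the first integral by $\EE\!\int_0^{\tau_R\wedge T}e^{-\lambda s}\lambda X^x_s\Phi(C^{c,\nu}_s)\,ds$. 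Sending $R\to\infty$ (so $\tau_R\to\infty$ since $X$ is recurrent) and then $T\to\infty$, the sub-linear growth of $W$ together with $\EE[|X^x_T|]\le K(1+|x|)$ and $W(x,1)=0$ yields $\lim_{T\to\infty}\EE[e^{-\lambda T}W(X^x_T,C^{c,\nu}_T)]=0$; the integral terms converge by uniform integrability (see the discussion preceding Proposition \ref{discretionarystopping}). We conclude $W(x,c)\le \mathcal{J}_{x,c}(\nu)$, hence $W\le U$ by taking the infimum over $\nu\in\cS_c$.

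\textbf{Step 2 ($W\ge U$ via $\nu^*$).} For the candidate $\nu^*$ of \eqref{op-contr01} the controlled trajectory stays in $\{x<\gamma_*(c)\}\cup\{C=1\}$: for $t<\tau_*$ we have $C^{c,\nu^*}_t=c$ with $X^x_t<\gamma_*(c)$ (so the first equation of \eqref{HJB-case3} makes the $(\mathbb{L}_X-\lambda)W$ integrand exactly equal to $-\lambda X^x_s\Phi(c)$), while for $t>\tau_*$ we have $C^{c,\nu^*}_t=1$ so $W\equiv0$ and $\Phi\equiv0$, killing the remaining integrands. The single jump of $\nu^*$ at $\tau_*$ has size $1-c$ and occurs at $x=\gamma_*(c)$; since $\gamma_*$ is decreasing (Theorem \ref{thm-HJB}), $(\gamma_*(c),c+u)$ lies in the action region for every $u\in(0,1-c]$, where $W(x,y)=x(1-y)$ and hence $W_c(\gamma_*(c),c+u)=-\gamma_*(c)$. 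Consequently
\begin{equation*}
\int_0^{1-c}W_c\bigl(\gamma_*(c),c+u\bigr)\,du=-\gamma_*(c)(1-c)=-X^x_{\tau_*}\Delta\nu^*_{\tau_*},
\end{equation*}
so the jump term in It\^o's formula combines with the cost contribution $e^{-\lambda\tau_*}X^x_{\tau_*}\Delta\nu^*_{\tau_*}$ to give zero. Putting these pieces together and using $\lim_{T\to\infty}\EE[e^{-\lambda T}W(X^x_T,C^*_T)]=0$ (which holds because $W(\cdot,1)\equiv 0$ and $\tau_*<\infty$ a.s.\ by recurrence of $X^x$), we obtain $W(x,c)=\mathcal{J}_{x,c}(\nu^*)\ge U(x,c)$, proving optimality of $\nu^*$ and the identity $W\equiv U$.

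The main obstacle I anticipate is the rigorous application of It\^o's formula to $W$ at the boundary $\gamma_*$, where $W_{xx}$ has a jump (smooth fit holds only up to first order in $x$ by Step 2 of Theorem \ref{thm-HJB}). This is handled either by a Meyer-It\^o argument noting that the local time of the continuous semimartingale $X^x$ at each point $\gamma_*(c)$ contributes only a term proportional to $[W_x(\gamma_*(c)+,c)-W_x(\gamma_*(c)-,c)]=0$, or by a standard mollification $W_n$ of $W$ in the $x$ variable combined with dominated convergence using the local $W^{2,\infty}$ bounds guaranteed by Theorem \ref{thm-HJB}.
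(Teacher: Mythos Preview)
Your proof is correct and follows essentially the same verification strategy as the paper's. The only cosmetic difference is in Step~2: the paper applies It\^o's formula up to $\tau_*$ and reads off the control cost directly from the boundary term $e^{-\lambda\tau_*}W(X^x_{\tau_*},c)=e^{-\lambda\tau_*}X^x_{\tau_*}(1-c)$, whereas you run It\^o on $[0,\infty)$ and recover the same quantity from the jump term $\int_0^{1-c}W_c(\gamma_*(c),c+u)\,du$ (note your phrase ``occurs at $x=\gamma_*(c)$'' tacitly assumes $x<\gamma_*(c)$; for $x\ge\gamma_*(c)$ one has $\tau_*=0$ and the jump is at $x$, but the computation is identical since $(x,c+u)$ is still in the action region).
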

\begin{proof}
The proof employs arguments similar to those used in the proof of Theorem \ref{theorem8}. We recall the regularity of $W$ by Theorem \ref{thm-HJB} and note that $\big|W(x,c)\big|\le K(1+|x|)$ for a suitable $K>0$. Then an application of It\^o's formula in the weak version of \cite{FlemingSoner}, Chapter 8, Section VIII.4, Theorem 4.1, easily gives $W(x,c)\le U(x,c)$ for all $(x,c)\in\mathbb{R}\times[0,1]$ (cf.~also arguments in step 1 of the proof of Theorem \ref{theorem8}).

On the other hand, taking $C^*_t:=C^{c,\nu^*}_t=c+\nu^*_t$, $c\in[0,1)$, with $\nu^*$ as in \eqref{op-contr01}, and applying It\^o's formula again (possibly using localisation arguments as in the proof of Theorem \ref{theorem8}) we find
\begin{align}
\label{Ito}
W(x,c)=&\EE\left[e^{-\lambda\tau_*}W(X^x_{\tau_*},C^*_{\tau_*})+\int^{\tau_*}_0{e^{-\lambda s}\lambda\,X^x_s\,
\Phi(C^*_{s})ds}\right]\nonumber\\
&-\EE\left[\int^{\tau_*}_0{e^{-\lambda s}W_c(X^x_s,C^*_{s})d\,\nu^{*,cont}_s}\right]\\
&-\EE\left[\sum_{0\le s<\tau_*}e^{-\lambda s}\big(W(X^x_s,C^*_{s+})-W(X^x_s,C^*_{s})\big)\right].\nonumber
\end{align}
Since $(X^x_s,C^*_s)=(X^x_s,c)$ for $s\le\tau_*$, then the third and fourth term on the right-hand side of \eqref{Ito} equal zero, whereas for the first term we have from \eqref{HJB-case3} and \eqref{op-contr01}
\begin{align}\label{opt-C01}
\EE&\left[e^{-\lambda\tau_*}W(X^x_{\tau_*},c+\nu^*_{\tau_*})\right]=\EE\left[e^{-\lambda\tau_*}
W(X^x_{\tau_*},c)\right]\nonumber\\
=&\EE\left[e^{-\lambda\tau_*}X^x_{\tau_*}(1-c)\right]=\EE\left[\int^\infty_0{e^{-\lambda s}X^x_sd\,C^*_s}\right].
\end{align}
For the second term on the right-hand side of \eqref{Ito} we have
\begin{align}\label{opt-C02}
\EE\left[\int^{\tau_*}_0{e^{-\lambda s}\lambda\,X^x_s\,
\Phi(c+\nu^*_{s})ds}\right]=\EE\left[\int^{\infty}_0{e^{-\lambda s}\lambda\,X^x_s\,
\Phi(c+\nu^*_{s})ds}\right],
\end{align}
since $\Phi(1)=0$ by Assumption \ref{ass-Phi}. Now, \eqref{Ito}, \eqref{opt-C01} and \eqref{opt-C02} give $W(x,c) = U(x,c)$, and $C^*$ is optimal.
\end{proof}

\section{Considerations in the case $\hat{c} \in (0,1)$}
\label{CompleteCase}

In this section we discuss the remaining case when $\hat{c}\in(0,1)$, equivalently when the function $k(\cdot)$ of \eqref{def-k} changes its sign over $(0,1)$.\vspace{+6pt}

$1.$ For $c\in[\hat{c},1]$ it can be seen that setting the strictly convex penalty function in \eqref{nonconvex} equal to $\hat{\Phi}(\,\cdot\,):=\Phi(\hat{c}+\,\cdot\,)$ reduces problem \eqref{valuefunction} to that of Section \ref{Case2}. The optimal control strategy for $c\in[\hat{c},1]$ is therefore of reflecting type and it is characterised in terms of a decreasing boundary $\hat{\beta}$ defined on $(\hat{c},1]$. As expected the classical connection with optimal stopping holds in the sense that $U_c=v$ on $\RR\times(\hat{c},1]$ with $v$ as in \eqref{opt-st}.\vspace{+6pt}

$2.$ When $c\in[0,\hat{c})$ 
the optimal policy depends both on the local considerations discussed at the beginning of Section \ref{Case3} and also on the solution for $c\in[\hat{c},1]$ given in point $1$ above. Assuming that the analytic expression of $U$ is known for $c\in[\hat{c},1]$ then the HJB equation in the set $\RR\times [0,\hat{c})$ has a natural boundary condition at $c=\hat c$ and its solution is expected to paste (at least) continuously with $U(\,\cdot\,,\hat{c})$.
Since the expression for $U$ obtained in Section \ref{Case2} is non-explicit in general, analysis of the geometry of the action and inaction regions is more challenging in this case and its rigorous study is beyond the scope of this paper; nevertheless we will discuss some qualitative ideas based on the findings of the previous sections. 
\vspace{+6pt}

$3.$ The local considerations at the beginning of Section \ref{Case3} hold in the same way in this setting and therefore we may expect a repelling behaviour of the boundary of the action region. Conjecturing the existence of a decreasing free boundary $\hat{\gamma}$ defined on $[0,\hat{c})$ two possible optimal controls can be envisioned, depending on the position of $\hat{\gamma}$ relative to $\hat{\beta}$ in the $(x,c)$-plane. For an initial inventory $c\in[0,\hat{c})$, once the uncontrolled diffusion $X$ hits $\hat{\gamma}(c)$ the inventory should be increased as follows: $i)$ if $\hat{\gamma}(c)\le\inf_{c\in(\hat{c},1]}\hat{\beta}(c)$ all available control is exerted, otherwise $ii)$ the inventory is increased just enough to push $(X,C)$ inside the portion of inaction region in $\RR\times(\hat{c},1]$ (i.e.~the subset of $\RR\times(\hat{c},1]$ bounded below by $\hat{\beta}$). As a result the optimal boundaries $\hat{\beta}$ and $\hat{\gamma}$ exhibit a strong coupling which together with the difficulty in handling the expressions for $\phi_\lambda$ and $\psi_\lambda$ challenges the methods of solution employed in this paper.\vspace{+6pt}

$4.$ We note that determining the geometry of two coexisting free boundaries in a two dimensional state space is not a novelty in the context of SSCDS but explicit solutions can only be found in some specific models (see~for instance \cite{KOWZ} where a Brownian motion and a quadratic cost are considered).
Indeed it is possible to provide a solution when $\theta=0$, for which we refer the reader to \cite{DeAFeMo14b}.
Before concluding this section we show that the latter results are consistent with the above ideas.
In \cite{DeAFeMo14b} the interval $[0,1]$ for the values of the inventory is again split into two subintervals by a point that here we denote by $\tilde{c}$ for clarity (in \cite{DeAFeMo14b} it is denoted by $\hat{c}$). In the portion $\RR\times[0,\tilde{c})$ of the state space of \cite{DeAFeMo14b} the boundary of the action region is of repelling type consistent with point 3 above, although in this case two repelling boundaries are present.
For $c\in(\tilde{c},1]$ the free boundary in \cite{DeAFeMo14b} is constant with respect to $c$ and, although the optimal policy is therefore of bang-bang type, it is not difficult to see that it may equally be interpreted as the limit of reflecting boundaries. Indeed smooth fit holds at this boundary when $c\in(\tilde{c},1]$, along with the differential  connection with optimal stopping (see p.~3 in the Introduction of \cite{DeAFeMo14b} and Remark 3.3 therein) so that the qualitative behaviour is the same as that described in point $1$ above.

\appendix

\section{A Problem of Storage and Consumption}
\label{app:formulation}
\renewcommand{\theequation}{A-\arabic{equation}}

A problem naturally arising in the analysis of power systems is the optimal charging of electricity storage. We consider the point of view of an agent that commits to fully charging an electrical battery on or before a randomly occurring time $\tau>0$ of demand. At any time $t>0$ prior to the arrival of the demand the agent may increase the storage level $C_t$ (within the limits of its capacity, which is one unit) by buying electricity at the spot price $X_t$. Several specifications of the spot price dynamics can be considered. We take $(X_t)_{t\ge0}$ as a continuous, strong Markov process adapted to a filtration $(\cF_t)_{t\ge0}$ on a complete probability space $(\Omega,\cF,\PP)$.

If the battery is not full at time $\tau$ then it is filled by a less efficent method so that the terminal spot price is weighted by a strictly convex function $\Phi$, and so is equal to $\Psi(X_\tau,C_\tau)=X_\tau\Phi(C_\tau)$ with $\Phi(1)=0$ (cf.~Assumption \ref{ass-Phi}). The storage level can only be increased and the process $C_t = c + \nu_t$ follows the dynamics \eqref{ControlledY} with $\nu\in \cS_c$ (cf.~\eqref{admissiblecontrols}).
For simplicity and with no loss of generality we assume that costs are discounted at a rate $r=0$.

The aim of the agent is to minimise the future expected costs by optimally increasing the storage within its limited capacity. Then the agent faces the optimisation problem with random maturity
\begin{align}\label{vfun00}
\inf_{\nu\in\cS_c}\EE\Big[\int^\tau_0{X_t}d\nu_t+X_\tau\Phi(C_\tau)\Big].
\end{align}
Various specifications for the law of $\tau$ are clearly possible. Here we consider only the case of $\tau$ independent of the filtration $(\cF_t)_{t\ge0}$ and distributed according to an exponential law with parameter $\lambda>0$; that is,
\begin{align}
\PP\big(\tau>t\big)=e^{-\lambda t}.
\end{align}
This setting effectively models the demand as completely unpredictable. By the assumption of independence of $\tau$ and $(X,C)$, for any $\nu$ we easily obtain
\begin{align}
\EE\Big[X_\tau\Phi(C_\tau)\Big]=\EE\Big[\int^\infty_0{\lambda e^{-\lambda t}X_t\Phi(C_t)dt}\Big]
\end{align}
and
\begin{align}
\EE\Big[\int^\tau_0{X_t}d\nu_t\Big]=&\EE\Big[\int^\infty_0{\lambda e^{-\lambda s}\Big(\int^s_0{X_td\nu_t}\Big)ds}\Big]\nonumber\\
=&\EE\Big[\int^\infty_0{\Big(\int^\infty_t{\lambda e^{-\lambda s}ds}\Big)X_td\nu_t}\Big]=\EE\Big[\int^\infty_0{e^{-\lambda t}X_t d\nu_t}\Big],
\end{align}
where the integrals were exchanged by an application of Fubini's theorem. It then follows that problem \eqref{vfun00} may be rewritten as in \eqref{valuefunction} and \eqref{nonconvex}.

\section{Facts on the Ornstein-Uhlenbeck Process}
\label{factsOU}
\renewcommand{\theequation}{B-\arabic{equation}}

Recall the Ornstein-Uhlenbeck process $X$ of \eqref{OU}. It is well known that $X$ is a positively recurrent Gaussian process (cf., e.g., \cite{BorodinSalminen}, Appendix 1, Section 24, pp.\ 136-137) with state space $\mathbb{R}$ and that \eqref{OU} admits the explicit solution
\beq
\label{OUexplicit}
X^x_t= \mu + (x-\mu)e^{-\theta t} + \int_0^t \sigma e^{\theta(s-t)}dB_s.
\eeq
We introduced its infinitesimal generator $\mathbb{L}_{X}$ in \eqref{def:LX};
the characteristic equation $\mathbb{L}_{X}u = \lambda u$, $\lambda > 0$, admits the two linearly independent, positive solutions (cf.~\cite{JYC}, p.\ 280)
\beq
\label{phi}
\phi_{\lambda}(x):=
e^{\frac{\theta(x-\mu)^2}{2\sigma^2}}D_{-\frac{\lambda}{\theta}}\Big(\frac{(x-\mu)}{\sigma}\sqrt{2\theta}\Big)
\eeq
and
\beq
\label{psi}
\psi_{\lambda}(x):=
e^{\frac{\theta(x-\mu)^2}{2\sigma^2}}D_{-\frac{\lambda}{\theta}}\Big(-\frac{(x-\mu)}{\sigma}\sqrt{2\theta}\Big),
\eeq
which are strictly decreasing and strictly increasing, respectively. In both \eqref{phi} and \eqref{psi} $D_{\alpha}$ is the cylinder function of order $\alpha$ (see \cite{Trascendental}, Chapter VIII, among others) and it is also worth recalling that (see, e.g., \cite{Trascendental}, Chapter VIII, Section 8.3, eq.\ (3) at page 119)
\begin{align}
\label{cylinder}
D_{\alpha}(x):= \frac{e^{-\frac{x^2}{4}}}{\Gamma(-\alpha)}\int_0^{\infty}t^{-\alpha -1} e^{-\frac{t^2}{2} - x t} dt, \quad \text{Re}(\alpha)<0,
\end{align}
where $\Gamma(\cdot)$ is the Euler's Gamma function.

We denote by $\PP_x$ the probability measure on $(\Omega, \cF)$ induced by the process $(X^x_t)_{t\ge0}$, i.e.~such that $\mathbb{P}_x(\,\cdot\,) = \mathbb{P}(\,\cdot\,| X_0=x)$, $x \in \mathbb{R}$, and by $\EE_x[\,\cdot\,]$ the expectation under this measure.  Then, it is a well known result on one-dimensional regular diffusion processes (see, e.g., \cite{BorodinSalminen}, Chapter I, Section 10) that
\begin{equation}
\label{hittingtimes}
\EE_x[e^{-\lambda\tau_y}]=
\left\{
\begin{array}{ll}
\displaystyle \frac{\phi_{\lambda}(x)}{\phi_{\lambda}(y)}, \quad x \geq y,\\
\\
\displaystyle \frac{\psi_{\lambda}(x)}{\psi_{\lambda}(y)}, \quad x \leq y,
\end{array}
\right.
\end{equation}
with $\phi_{\lambda}$ and $\psi_{\lambda}$ as in \eqref{phi} and \eqref{psi} and $\tau_{y}:=\inf\{t \geq 0: X^x_t = y\}$ the hitting time of $X^x$ at level $y \in \mathbb{R}$. Due to the recurrence property of the Ornstein-Uhlenbeck process $X$ one has $\tau_{y} < \infty$ $\PP_x$-a.s.\ for any $x,y \in \R$.


\section{Some Proofs from Section \ref{Case2}}
\renewcommand{\theequation}{C-\arabic{equation}}
\label{app-proof}

\subsection{Proof of Theorem \ref{verifthm01}}
\label{app-proof1}

The proof goes through a number of steps which we organise in Lemmas, Propositions and Theorems. Integrating by parts in \eqref{opt-st} and noting that the martingale $(\int_0^{t}e^{-\lambda s}\sigma dB_s)_{t \geq 0}$ is uniformly integrable we can write
\bea
\label{osprob}
u(x;c) := v(x;c) + x =
\sup_{\sigma\ge0}\EE\bigg[\int_0^{\sigma}e^{-\lambda s}\left[k(c)X^x_{s} - \theta \mu \right]ds\bigg],
\eea
with $k(c)$ as in \eqref{def-k}. For each $c \in [0,1]$ we define the continuation and stopping regions of problem \eqref{osprob} by
\beq
\label{regions}
\mathcal{C}_c: = \{x: u(x;c)>0\} \quad \mbox{and} \quad \DD_c := \{x: u(x;c)=0\},
\eeq
respectively. From standard arguments based on exit times from small balls one notes that $\DD_c \subset \{x: x \leq \frac{\theta \mu}{k(c)}\}$ as it is never optimal to stop immediately in its complement $\{x: x > \frac{\theta \mu}{k(c)}\}$. Since $x \mapsto u(x;c)$ is increasing, $\DD_c$ lies below $\mathcal{C}_c$ and we also expect the optimal stopping strategy to be of threshold type.

Now, for any given $c\in[0,1]$ and ${\beta(c)}\in\mathbb{R}$ we define the hitting time $\sigma_\beta(x,c):=\inf\{t \geq 0:X^x_t \leq {\beta(c)}\}$. For simplicity we set $\sigma_\beta(x,c)=\sigma_\beta$. A natural candidate value function for problem \eqref{osprob} is of the form
\beq
\label{uequation}
u^\beta(x;c)= \left\{\begin{array}{ll} \displaystyle \EE\bigg[\int_0^{\sigma_\beta}e^{-\lambda s}\left(k(c)X^x_{s} - \theta \mu \right)ds\bigg], & x > {\beta(c)}, \\[+16pt]
 0, & x \leq {\beta(c)}.\end{array}
\right.
\eeq
An application of Fubini's theorem, \eqref{OUexplicit} and some simple algebra leads to
\begin{lemma}
\label{lemmaG}
For all $(x,c)\in\RR\times [0,1]$ and with $G$ as in \eqref{Gexpression} one has
\begin{eqnarray}
& \displaystyle \EE\bigg[\int_0^{\infty}e^{-\lambda s}\left(k(c)X^x_{s} - \theta \mu \right)ds\bigg]=G(x;c).
\end{eqnarray}
\end{lemma}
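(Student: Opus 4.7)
The plan is to apply Fubini-Tonelli to interchange expectation and time integral, then use the explicit mean of the Ornstein--Uhlenbeck process to carry out two elementary Laplace integrals.

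First, I would justify the interchange. From \eqref{OUexplicit} the mean is $\mathbb{E}[X^x_s]=\mu+(x-\mu)e^{-\theta s}$ and the variance is $\frac{\sigma^2}{2\theta}(1-e^{-2\theta s})\le\frac{\sigma^2}{2\theta}$, so by Jensen's inequality there exists a constant $M=M(x,\mu,\sigma,\theta)$ with $\mathbb{E}\bigl[|X^x_s|\bigr]\le M$ uniformly in $s\ge 0$. Hence
\begin{equation*}
\int_0^\infty e^{-\lambda s}\,\mathbb{E}\bigl[\bigl|k(c)X^x_s-\theta\mu\bigr|\bigr]\,ds\le \frac{|k(c)|M+\theta\mu}{\lambda}<\infty,
\end{equation*}
which licences Fubini-Tonelli.

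Next I would compute. Swapping the order of integration and inserting the explicit mean yields
\begin{equation*}
\mathbb{E}\Bigl[\int_0^\infty e^{-\lambda s}(k(c)X^x_s-\theta\mu)\,ds\Bigr]=\int_0^\infty e^{-\lambda s}\bigl[k(c)\mu+k(c)(x-\mu)e^{-\theta s}-\theta\mu\bigr]\,ds.
\end{equation*}
The $s$-integral splits into two elementary Laplace transforms: $\int_0^\infty e^{-\lambda s}\,ds=1/\lambda$ and $\int_0^\infty e^{-(\lambda+\theta)s}\,ds=1/(\lambda+\theta)$. Assembling the pieces gives
\begin{equation*}
\frac{\mu\bigl(k(c)-\theta\bigr)}{\lambda}+\frac{k(c)(x-\mu)}{\lambda+\theta}=G(x;c),
\end{equation*}
as claimed by \eqref{Gexpression}.

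There is no real obstacle here; the only point requiring a brief word is the integrability bound ensuring Fubini's theorem applies, after which everything reduces to two one-line integrals. The statement is essentially a bookkeeping lemma isolating the representation formula on the whole half-line, which will later combine with the discounted hitting-time factor to produce the bounded-interval representation of $u^\beta$ in \eqref{uequation}.
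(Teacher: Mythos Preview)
Your proposal is correct and follows essentially the same route as the paper, which simply states that the result follows from ``an application of Fubini's theorem, \eqref{OUexplicit} and some simple algebra.'' Your explicit integrability bound for the Fubini step and the two Laplace integrals are exactly the details the paper leaves implicit.
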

Recall $\LL$ and $\phi_\lambda$ as in the statement of Theorem \ref{verifthm01}. The analytical expression of $u^\beta$ is provided in the next
\begin{lemma}
\label{lemma1}
For $u^\beta$ as in \eqref{uequation} it holds
\begin{align}
u^\beta(x;c)=\left\{\begin{array}{ll}
\displaystyle G(x;c)-\frac{G({\beta(c)};c)}{\phi_{\lambda}({\beta(c)})}\phi_{\lambda}(x), & x > {\beta(c)} \\ [+10pt]
0, & x \leq {\beta(c)}.\end{array}
\label{def-ub}
\right.
\end{align}
\end{lemma}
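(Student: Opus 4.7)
The plan is to decompose the expectation in \eqref{uequation} using the strong Markov property and the explicit Laplace transform of the hitting time \eqref{hittingtimes}, reducing everything to the quantity $G$ already computed in Lemma \ref{lemmaG}.

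For $x>\beta(c)$, the recurrence of the Ornstein-Uhlenbeck process (Appendix \ref{factsOU}) ensures $\sigma_\beta<\infty$ $\PP$-a.s. First I would write
\begin{align*}
u^\beta(x;c)=\EE\!\left[\int_0^{\infty}\!e^{-\lambda s}(k(c)X^x_s-\theta\mu)\,ds\right]-\EE\!\left[\int_{\sigma_\beta}^{\infty}\!e^{-\lambda s}(k(c)X^x_s-\theta\mu)\,ds\right],
\end{align*}
where both integrals are well defined by the $L^2$-boundedness of $X^x$ stated in the paragraph preceding Proposition \ref{discretionarystopping} (together with Fubini), and the splitting is legitimate because the improper integral converges absolutely after taking expectations. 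Lemma \ref{lemmaG} identifies the first term as $G(x;c)$.

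For the second term I would apply the strong Markov property of $X^x$ at the stopping time $\sigma_\beta$, together with path-continuity giving $X^x_{\sigma_\beta}=\beta(c)$ $\PP$-a.s.:
\begin{align*}
\EE\!\left[\int_{\sigma_\beta}^{\infty}\!e^{-\lambda s}(k(c)X^x_s-\theta\mu)\,ds\right]
=\EE\!\left[e^{-\lambda\sigma_\beta}\,\EE_{\beta(c)}\!\left[\int_0^{\infty}\!e^{-\lambda s}(k(c)X_s-\theta\mu)\,ds\right]\right]
=G(\beta(c);c)\,\EE\!\left[e^{-\lambda\sigma_\beta}\right],
\end{align*}
where the inner expectation is $G(\beta(c);c)$ again by Lemma \ref{lemmaG}. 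Finally, since $x>\beta(c)$, formula \eqref{hittingtimes} gives $\EE[e^{-\lambda\sigma_\beta}]=\phi_\lambda(x)/\phi_\lambda(\beta(c))$, and combining the two pieces yields the claimed expression. For $x\le\beta(c)$ the result is immediate from $\sigma_\beta=0$.

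I expect no serious obstacle: the main points to verify carefully are (i) absolute convergence so that the additive splitting is valid (handled by the uniform bounds on $X^x$ in $L^2$), (ii) applicability of the strong Markov property at the hitting time $\sigma_\beta$ (standard for continuous strong Markov processes), and (iii) that $\sigma_\beta<\infty$ a.s.\ so that the Laplace transform formula \eqref{hittingtimes} may be used without a discount factor at infinity — all of which follow from the properties of $X$ recalled in Appendix \ref{factsOU}.
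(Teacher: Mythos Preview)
Your proposal is correct and follows essentially the same route as the paper: split the integral at $\sigma_\beta$, identify the full-horizon piece as $G(x;c)$ via Lemma \ref{lemmaG}, apply the strong Markov property (the paper phrases this as conditioning on $\cF_{\sigma_\beta}$) together with continuity and recurrence of $X$ to reduce the tail to $G(\beta(c);c)\,\EE[e^{-\lambda\sigma_\beta}]$, and finish with \eqref{hittingtimes}. If anything, you are slightly more explicit about the integrability and the $x\le\beta(c)$ case than the paper's proof.
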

\begin{proof}
From \eqref{uequation}, \eqref{Gexpression} and strong Markov property we have that for all $x>{\beta(c)}$
\bea
u^\beta(x;c)
 \hspace{-0.25cm} &=&  \hspace{-0.25cm} G(x;c) - \EE\bigg[\EE\bigg[\int^{\infty}_{\sigma_\beta}e^{-\lambda s}\left(k(c)X^x_{s} - \theta \mu \right)ds \Big|\mathcal{F}_{\sigma_\beta}\Big]\bigg] \\
 \hspace{-0.25cm} &=&  \hspace{-0.25cm} G(x;c) - \EE\Big[e^{-\lambda \sigma_\beta}G(X^x_{\sigma_\beta};c)\Big],\nonumber \\
 \hspace{-0.25cm} &=&  \hspace{-0.25cm} G(x;c) - G({\beta(c)};c) \frac{\phi_{\lambda}(x)}{\phi_{\lambda}({\beta(c)})}, \nonumber
\eea
where the last equality follows since $X^x$ is positively recurrent and by using well known properties of hitting times summarised in Appendix \ref{factsOU} for completeness (cf.~\eqref{hittingtimes}).

\end{proof}
The candidate optimal boundary ${\beta_*(c)}$ is found by imposing the familiar {\em principle of smooth fit}, i.e.\ the continuity of the first derivative $u^\beta_x$ at the boundary $\beta_*$. This amounts to solving problem \eqref{smfit01}.
\begin{prop}
\label{bstar01}
Recall \eqref{x0-01}. For each $c\in[0,1]$ there exists a unique solution ${\beta_*(c)}\in(-\infty, x_0(c))$ of \eqref{smfit01}.
Moreover, $\beta_*\in C^1([0,1])$ and it is strictly decreasing.
\end{prop}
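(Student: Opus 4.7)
The plan is to recast the smooth-fit condition \eqref{smfit01} as the scalar equation $H(\cdot;c)=0$ where
\[
H(x;c) := G_x(x;c)\phi_{\lambda}(x) - G(x;c)\phi_{\lambda}'(x),
\]
and then to analyse the sign structure of $H$ before invoking the implicit function theorem. The first step is to note that $G(\cdot;c)$ is affine with constant slope $k(c)/(\lambda+\theta)>0$; a direct computation using the identities $k(c)-\theta=\lambda(1+\Phi'(c))$ and $k(c)+\theta\Phi'(c)=(\lambda+\theta)(1+\Phi'(c))$ gives $G(x_0(c);c)=0$, so $G<0$ on $(-\infty,x_0(c))$ and $G>0$ on $(x_0(c),+\infty)$.

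Next I would exploit $G_{xx}\equiv 0$, which yields $H_x=-G\phi_{\lambda}''$. Using $\phi_{\lambda}>0$, $\phi_{\lambda}'<0$ and $\phi_{\lambda}''>0$ (the convexity of $\phi_{\lambda}$ follows from the ODE $\tfrac{1}{2}\sigma^2\phi_{\lambda}''=\lambda\phi_{\lambda}-\theta(\mu-x)\phi_{\lambda}'$, immediately for $x\le\mu$ and elsewhere from the explicit representation \eqref{phi}), one obtains $H_x(\cdot;c)>0$ on $(-\infty,x_0(c))$ and $H_x(\cdot;c)<0$ on $(x_0(c),+\infty)$. At the turning point, $H(x_0(c);c)=G_x(x_0(c);c)\phi_{\lambda}(x_0(c))>0$. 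To ensure $\beta_*(c)\in\RR$ I would factor $H=\phi_{\lambda}\bigl(G_x-G\phi_{\lambda}'/\phi_{\lambda}\bigr)$ and use the OU asymptotic $\log\phi_{\lambda}(x)\sim\theta x^2/\sigma^2$ so that $\phi_{\lambda}'/\phi_{\lambda}\sim 2\theta x/\sigma^2\to-\infty$; since $G$ diverges only linearly, the product $G\phi_{\lambda}'/\phi_{\lambda}$ blows up quadratically to $+\infty$, hence $H(x;c)\to-\infty$ as $x\to-\infty$. The intermediate value theorem together with strict monotonicity of $H(\cdot;c)$ on $(-\infty,x_0(c))$ then delivers a unique $\beta_*(c)\in(-\infty,x_0(c))$ solving \eqref{smfit01}.

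For the $C^1$ regularity and strict monotonicity I would apply the implicit function theorem to the equation $H(\cdot,\cdot)=0$: $H\in C^1(\RR\times[0,1])$ by Assumption \ref{ass-Phi}, and $H_x(\beta_*(c);c)=-G(\beta_*(c);c)\phi_{\lambda}''(\beta_*(c))>0$ since $G(\beta_*;c)<0$. Therefore $\beta_*\in C^1([0,1])$ and
\begin{equation}\label{monotbeta}
\beta_*'(c)=-\frac{H_c(\beta_*(c);c)}{H_x(\beta_*(c);c)}.
\end{equation}
To determine the sign I would compute $H_c(x;c)=k'(c)\bigl[\phi_{\lambda}(x)/(\lambda+\theta)-\bigl(\mu/\lambda+(x-\mu)/(\lambda+\theta)\bigr)\phi_{\lambda}'(x)\bigr]$, substitute $\phi_{\lambda}(\beta_*)=\tfrac{\lambda+\theta}{k(c)}G(\beta_*;c)\phi_{\lambda}'(\beta_*)$ coming from $H(\beta_*;c)=0$, and use the elementary identity $G(x;c)/k(c)-\mu/\lambda-(x-\mu)/(\lambda+\theta)=-\theta\mu/(\lambda k(c))$ to collapse the bracket to $H_c(\beta_*;c)=-\tfrac{\theta\mu k'(c)}{\lambda k(c)}\phi_{\lambda}'(\beta_*)>0$, since $k'(c)=\lambda\Phi''(c)>0$ by strict convexity of $\Phi$, $k(c)>0$, and $\phi_{\lambda}'(\beta_*)<0$. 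Thus $\beta_*'(c)<0$.

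The main obstacles in this plan are precisely those features that are most specific to the Ornstein--Uhlenbeck choice of $X$: the asymptotic behaviour of $H$ as $x\to-\infty$ and the global convexity of $\phi_{\lambda}$. Both rely on the explicit parabolic-cylinder representation \eqref{phi}, and they are the very points at which a straightforward extension to a general diffusion would break down, in line with the comments in Remark \ref{rem:gendif}.
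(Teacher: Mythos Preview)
Your argument is correct and follows the same scaffolding as the paper's proof (define $H=G_x\phi_\lambda-G\phi_\lambda'$, show it is strictly increasing on $(-\infty,x_0(c))$ with $H(x_0(c);c)>0$, locate a unique root, then invoke the implicit function theorem). However you diverge from the paper in two places, and both differences are worth noting. For existence, the paper does not appeal to the asymptotics of $\phi_\lambda'/\phi_\lambda$; instead it observes that $H_{xx}=-G_x\phi_\lambda''-G\phi_\lambda'''<0$ on $(-\infty,x_0(c))$, so $H$ is strictly concave there, which together with $H_x>0$ forces $H(x;c)\to-\infty$ as $x\to-\infty$. This is slightly more self-contained than your route, using only $\phi_\lambda''>0$ and $\phi_\lambda'''<0$ rather than quantitative growth of $\log\phi_\lambda$. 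For monotonicity the situation is reversed: the paper explicitly declares that ``a direct study of the sign'' of $-H_c/H_x$ ``seems non-trivial'' and instead uses a comparison trick, rewriting the smooth-fit condition as $\phi_\lambda'(\bar x)/\phi_\lambda(\bar x)=1/(\bar x-x_0(c))$ and exploiting that $c\mapsto x_0(c)$ is strictly decreasing to show $H(\beta_*(c);c')>0$ for $c'>c$. Your direct computation, substituting $\phi_\lambda(\beta_*)=\tfrac{\lambda+\theta}{k(c)}G(\beta_*;c)\phi_\lambda'(\beta_*)$ into $H_c$ and collapsing it to $-\tfrac{\theta\mu k'(c)}{\lambda k(c)}\phi_\lambda'(\beta_*)>0$, is in fact cleaner than the paper's trick and yields the sign of $\beta_*'$ at one stroke.
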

\begin{proof}
Since we are only interested in finite valued solutions of \eqref{smfit01} and $\phi_{\lambda}(x)>0$ for all $x\in(-\infty,+\infty)$ we may as well consider the equivalent problem of finding $x\in\mathbb{R}$ such that $H(x;c)=0$, where
\begin{align}\label{def-H01}
H(x;c) := G_x(x;c)\phi_{\lambda}(x)-G(x;c)\phi'_{\lambda}(x).
\end{align}
We first notice that $G(x_0(c);c)=0$ (cf.~\eqref{Gexpression} and \eqref{x0-01}) and since $k(c)>0$, then \emph{(i)} $G(x;c)>0$ for $x>x_0(c)$, \emph{(ii)} $G(x;c)<0$ for $x<x_0(c)$ and \emph{(iii)} $G_x(x;c)>0$ for all $x$. Hence
\begin{align}\label{eq:Hx0}
H(x_0(c);c)=G_x(x_0(c);c)\phi_{\lambda}(x_0(c))>0.
\end{align}
Recall also that $\phi_{\lambda}$ is strictly convex (cf.\ \eqref{phi} and \eqref{cylinder} in Appendix \ref{factsOU}), then it easily follows by \eqref{Gexpression} and \eqref{def-H01} that
\begin{align}\label{smfit02}
H_x(x;c)= -G(x;c)\phi''_{\lambda}(x)>0,\qquad\text{for $x<x_0(c)$.}
\end{align}
Moreover, $H(x;c)>0$ for all $x \geq x_0(c)$ and so if ${\beta_*(c)}$  exists such that $H({\beta_*(c)};c)=0$ then ${\beta_*(c)}<x_0(c)$. Derivation of \eqref{smfit02} with respect to $x$ gives
\begin{equation*}
H_{xx}(x;c) = -G_x(x;c)\phi''_{\lambda}(x)-G(x;c)\phi'''_{\lambda}(x) <0,\qquad\text{for $x<x_0(c)$,}
\end{equation*}
which implies that $x \mapsto H(x;c)$ is continuous, strictly increasing and strictly concave on
$(-\infty, x_0(c))$. Hence, by \eqref{eq:Hx0} there exists a unique ${\beta_*(c)}<x_0(c)$ solving $H({\beta_*(c)};c)=0$ (and equivalently \eqref{smfit01}).
Since $H_x(\beta_*(c);c)>0$ for all $c\in[0,1]$ (cf.~\eqref{smfit02}), then $\beta_*\in C^1([0,1])$ from the implicit function's theorem, with
\begin{align}\label{eq:derivbeta}
\beta^\prime_*(c)=-\,\frac{H_c({\beta_*(c)};c)}{H_x({\beta_*(c)};c)},\quad c\in[0,1].
\end{align}

We now show that $c\mapsto {\beta_*(c)}$ is strictly decreasing. A direct study of the sign of the right-hand side of \eqref{eq:derivbeta} seems non-trivial so we use a different trick. It is not hard to verify from \eqref{x0-01} that $c \mapsto x_0(c)$ is strictly decreasing since $c \mapsto \Phi'(c)$ is strictly increasing. Setting $\bar{x}:={\beta_*(c)}$ in \eqref{smfit01}, straightforward calculations give
\begin{equation*}
\frac{\phi'_{\lambda}(\bar{x})}{\phi_{\lambda}(\bar{x})} = \frac{ G_x(\bar{x};c)}{G(\bar{x};c)} = \frac{ k(c)}{\bar{x} k(c) + \mu\theta\Phi'(c)} = \frac 1 {\bar{x}-x_0(c)}
\end{equation*}
so that $c \mapsto \frac{ G_x(\bar{x};c)}{G(\bar{x};c)}$ is strictly decreasing. Since $c\mapsto x_0(c)$ is continuous it is always possible to pick $c'>c$ sufficiently close to $c$ so that $\bar{x}< x_0(c')<x_0(c)$ (hence $G(\bar{x};c')<0$) and one finds
\begin{align}\label{monotbeta}
\frac{ G_x(\bar{x};c')}{G(\bar{x};c')} < \frac{\phi'_{\lambda}(\bar{x})}{\phi_{\lambda}(\bar{x})}
\end{align}
and therefore $H(\bar{x};c')>0$. It follows that $\beta_*(c')<{\beta_*(c)}$, since $x \mapsto H(x;c)$ is increasing for $x<x_0(c')$. Then $c \mapsto {\beta_*(c)}$ is a strictly decreasing map.
\end{proof}
We verify the optimality of $\beta_*$ in the next theorem and note that a stopping time $\sigma$ is optimal for \eqref{osprob} if and only if it is optimal for \eqref{opt-st}.
\begin{theorem}
\label{verifthm02}
The boundary $\beta_*$ of Proposition \ref{bstar01} is optimal for \eqref{osprob} in the sense that $\sigma^*$ of \eqref{eq:sigmastar}
is an optimal stopping time and $u^{\beta_*}\equiv u$.
\end{theorem}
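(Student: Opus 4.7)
The strategy is a direct verification via a Dynkin-type decomposition of the payoff functional. Define
\begin{align*}
\eta(x;c):=G(x;c)-\frac{G(\beta_*(c);c)}{\phi_\lambda(\beta_*(c))}\phi_\lambda(x),\qquad x\in\RR,
\end{align*}
so that $u^{\beta_*}(x;c)=\eta(x;c)$ for $x>\beta_*(c)$ and $u^{\beta_*}\equiv 0$ for $x\le\beta_*(c)$ by \eqref{def-ub}. The key step is to show that $\eta(\cdot;c)\ge 0$ on all of $\RR$: indeed $\eta(\beta_*(c);c)=0$ trivially, $\eta_x(\beta_*(c);c)=0$ is precisely the smooth-fit equation \eqref{smfit01}, and since $G(\cdot;c)$ is affine one has
\begin{align*}
\eta_{xx}(x;c)=-\frac{G(\beta_*(c);c)}{\phi_\lambda(\beta_*(c))}\phi''_\lambda(x)>0,\qquad x\in\RR,
\end{align*}
using that $G(\beta_*(c);c)<0$ (since $\beta_*(c)<x_0(c)$ by Proposition \ref{bstar01}) and the strict convexity of $\phi_\lambda$ (cf.\ \eqref{phi} and \eqref{cylinder}). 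Hence $\eta(\cdot;c)$ is strictly convex with a global minimum equal to zero attained only at $\beta_*(c)$.

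The verification proper relies on two identities. First, since $G_x=k(c)/(\lambda+\theta)$ is constant, an application of It\^o's formula to $e^{-\lambda t}G(X^x_t;c)$ combined with $(\mathbb{L}_X-\lambda)G(x;c)=-(k(c)x-\theta\mu)$ yields
\begin{align*}
\EE\bigg[\int_0^{\tau}e^{-\lambda s}\big(k(c)X^x_s-\theta\mu\big)ds\bigg]=G(x;c)-\EE\big[e^{-\lambda\tau}G(X^x_\tau;c)\big]
\end{align*}
for every $\PP$-a.s.\ finite stopping time $\tau$, the stochastic integral being a true martingale thanks to the discount and the moment bounds for $X^x$ (cf.\ Appendix \ref{factsOU}). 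Second, $e^{-\lambda t}\phi_\lambda(X^x_t)$ is a non-negative local, hence true, supermartingale because $(\mathbb{L}_X-\lambda)\phi_\lambda=0$, so optional sampling gives
\begin{align*}
\EE\big[e^{-\lambda\tau}\phi_\lambda(X^x_\tau)\big]\le\phi_\lambda(x).
\end{align*}
Subtracting and using $-G(\beta_*(c);c)/\phi_\lambda(\beta_*(c))>0$ together with $\eta\ge 0$ one obtains
\begin{align*}
u^{\beta_*}(x;c)-\EE\bigg[\int_0^{\tau}e^{-\lambda s}\big(k(c)X^x_s-\theta\mu\big)ds\bigg]\ge \EE\big[e^{-\lambda\tau}\eta(X^x_\tau;c)\big]\ge 0,
\end{align*}
and taking the supremum over $\tau$ gives $u^{\beta_*}\ge u$. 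The reverse inequality follows by choosing $\tau=\sigma^*$, which is $\PP$-a.s.\ finite by recurrence of $X^x$ and satisfies $X^x_{\sigma^*}=\beta_*(c)$ a.s., so that both the supermartingale inequality and the $\eta$-bound are saturated (the former by \eqref{hittingtimes} and the latter since $\eta(\beta_*(c);c)=0$), and hence the last display reduces to an equality producing an admissible stopping rule that attains $u^{\beta_*}(x;c)$.

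The main technical obstacle is that $\phi_\lambda$ grows super-exponentially as $x\to-\infty$, while $X^x$ only enjoys Gaussian tails; the martingale/supermartingale identities above therefore need to be established through the standard localisation $\tau\wedge\tau_R$, with $\tau_R:=\inf\{t\ge 0:|X^x_t|\ge R\}$. Up to $\tau_R$ all integrands are bounded and It\^o's formula applies classically; the passage $R\to\infty$ is then handled by Fatou's lemma for the non-negative quantities $\phi_\lambda$ and $\eta$ (which preserves the correct inequality directions) and by uniform integrability of $e^{-\lambda\tau\wedge\tau_R}G(X^x_{\tau\wedge\tau_R};c)$, afforded by the affine growth of $G$ and the moment estimates recalled in Appendix \ref{factsOU}.
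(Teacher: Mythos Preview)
Your decomposition via $\eta$ is elegant and, for $x\ge\beta_*(c)$, works exactly as you claim. The gap is in the displayed inequality
\[
u^{\beta_*}(x;c)-\EE\bigg[\int_0^{\tau}e^{-\lambda s}\big(k(c)X^x_s-\theta\mu\big)ds\bigg]\ge \EE\big[e^{-\lambda\tau}\eta(X^x_\tau;c)\big],
\]
which your two identities only deliver with $\eta(x;c)$ on the left, not $u^{\beta_*}(x;c)$. For $x<\beta_*(c)$ one has $u^{\beta_*}(x;c)=0<\eta(x;c)$, so the conclusion $u^{\beta_*}\ge u$ is unproven there. This is not cosmetic: it is precisely the statement that immediate stopping is optimal in the stopping region.

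There is, however, a one-line repair that keeps your argument intact and actually makes it cleaner than the paper's. From your inequality $\eta\ge u$ and $u\ge 0$ you get $u(\beta_*(c);c)=\eta(\beta_*(c);c)=0$. Since $x\mapsto X^x_t$ is increasing (cf.\ \eqref{OUexplicit}) and $k(c)>0$, the payoff in \eqref{osprob} is monotone in $x$ for each fixed stopping time, hence $x\mapsto u(x;c)$ is increasing; therefore $u(x;c)\le u(\beta_*(c);c)=0=u^{\beta_*}(x;c)$ for all $x\le\beta_*(c)$.

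By contrast, the paper applies It\^o--Tanaka directly to the $C^1$ (piecewise $C^2$) function $u^{\beta_*}$ and needs the separate inequality $\beta_*(c)<\hat{x}_0(c):=\theta\mu/k(c)$ (distinct from your $x_0(c)$) to ensure $(\mathbb{L}_X-\lambda)u^{\beta_*}\le\theta\mu-k(c)x$ in the stopping region; this is established at the end of their proof via estimates on $\phi_\lambda$. Your supermartingale decomposition sidesteps It\^o--Tanaka and, with the monotonicity patch above, dispenses with the $\hat{x}_0$-claim altogether. Conversely, the paper's route gives the bound $\beta_*(c)<\hat x_0(c)$ as a by-product, which may be of independent interest.
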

\begin{proof}
The candidate value function $u^{\beta_*}$ (cf.\ \eqref{def-u00}) is such that $u^{\beta_*}(\cdot;c) \in C^1(\mathbb{R})$ by Proposition \ref{bstar01} and
it is convex. Hence it is also nonnegative, since $u_x^{\beta_*}({\beta_*(c)};c)=u^{\beta_*}({\beta_*(c)};c)=0$ by \eqref{def-u00} and \eqref{smfit01}.


It is easily checked that
\bea \label{eq:diffu}
(\mathbb{L}_X-\lambda)u^{\beta_*}(x;c)=\left\{\begin{array}{ll}\theta\mu-k(c)x, & x > {\beta_*(c)}, \\ [+8pt]
0, & x \le {\beta_*(c)}.
\end{array}
\right.
\eea
We claim (and we will prove it later) that
\beq
\label{eq:xhat}
{\beta_*(c)}<\frac{\theta \mu}{k(c)}=:\hat{x}_0(c)
\eeq
so that $(\mathbb{L}_X-\lambda)u^{\beta_*}(x;c) \leq \theta\mu-k(c)x$ for all $x \in \mathbb{R}$.

Fix $(x,c)\in\RR\times[0,1]$. Take now $R>0$ such that ${\beta_*(c)} \in (-R,R)$ and define $\tau_R:=\inf\{t \geq 0: X^x_t \notin (-R,R)\}$. By convexity of $u^{\beta_*}(\cdot,c)$, the It\^o-Tanaka formula (see, for example, \cite{KS}, Chapter 3, Section 3.6 D) and the principle of smooth fit we have
\begin{align}\label{eq:IT01}
\EE\left[ e^{-\lambda(\tau_R \wedge \tau)}u^{\beta_*}(X^x_{\tau_R \wedge \tau},c)\right] \leq u^{\beta_*}(x,c) + \EE\left[\int_0^{\tau_R \wedge \tau} e^{-\lambda s}\big(\theta \mu - k(c)X^x_s\big)ds \right],
\end{align}
for an arbitrary $\PP$-a.s.~finite stopping time $\tau\ge0$. 
Now $\tau_R \wedge \tau \uparrow \tau$ as $R \uparrow \infty$ and the integral inside the expectation on the right-hand side of \eqref{eq:IT01} is uniformly integrable. Then taking limits as $R\uparrow\infty$ and using that $u^{\beta_*}\ge0$ we obtain
\begin{equation*}
u^{\beta_*}(x;c) \geq
\EE\left[\int_0^{\tau} e^{-\lambda s}\big(\theta \mu - k(c)X^x_s\big)ds \right].
\end{equation*}
Since $\tau$ is arbitrary we can take the supremum over all stopping times to obtain $u^{\beta_*}\ge u$.

To prove the reverse inequality we take $\tau=\sigma^*$ to have strict inequality in \eqref{eq:IT01}. Then we notice that $0\le u^{\beta_*}(x,c)\le |G(\beta_*(c);c)|+|G(x;c)|$ for $x>\beta_*(c)$ so that recurrence of $X^x$ implies that
\begin{align}\label{eq:uiub}
\big(e^{-\lambda \tau}u^{\beta_*}(X_{\tau}^x,c)\big)_{\tau \geq 0}\:\text{is uniformly integrable and}\: e^{-\lambda \sigma^*}u^{\beta_*}(X^x_{\sigma^*};c) = e^{-\lambda\,\sigma^*}u^{\beta_*}(\beta_*(c),c).
\end{align}
Therefore
\begin{align}
\lim_{R\to\infty}\EE\left[ e^{-\lambda\,(\tau_R \wedge \sigma^*)}u^{\beta_*}(X^x_{\tau_R \wedge \sigma^*},c)\right]=\EE\left[ e^{-\lambda\sigma^*}u^{\beta_*}(\beta_*(c),c)\right]=0,
\end{align}
and in the limit we find $u^{\beta_*} = u$.

To conclude the proof we only need to show that \eqref{eq:xhat} holds true. Set $\hat{x}_0=\hat{x}_0(c)$ for simplicity. We have
\beq
\frac{H(\hat{x}_0;c)}{\phi_{\lambda}(\hat{x}_0)}= \frac{k(c)}{\lambda+\theta}-\frac{\theta\mu(k(c)-\theta)}{\lambda(\lambda+\theta)}\frac{\phi_{\lambda}'(\hat{x}_0)}{\phi_{\lambda}(\hat{x}_0)}
\label{eq:Hfrac}
\eeq
by \eqref{Gexpression}, \eqref{def-H01} and \eqref{x0-01}; since $\big(\mathbb{L}_X-\lambda\big)\phi_{\lambda} = 0$ and $\phi''_{\lambda} >0$ we also have
\begin{align}
\theta(\mu-\hat{x}_0)\phi'_{\lambda}(\hat{x}_0)-\lambda\phi_{\lambda}(\hat{x}_0)<0.
\label{eq:ode1}
\end{align}
It is clear that if $k(c)\ge\theta$ then the right-hand side of \eqref{eq:Hfrac} is strictly positive and ${\beta_*(c)}<\hat{x}_0(c)$. On the other hand, if $k(c)<\theta$ then $\mu-\hat{x}_0<0$ and from \eqref{eq:ode1} we get
\begin{align}\label{boundphi}
\frac{\phi'(\hat{x}_0)}{\phi(\hat{x}_0)}>\frac{\lambda}{\theta\mu}\left( \frac{k(c)}{k(c)-\theta}\right).
\end{align}
Now plugging \eqref{boundphi} into the right-hand side of \eqref{eq:Hfrac} we find $H(\hat{x}_0;c)/\phi_{\lambda}(\hat{x}_0) > 0$ so that again ${\beta_*(c)}<\hat{x}_0(c)$.
\end{proof}
\vspace{0.25cm}

\subsection{Proof of Proposition \ref{C-opt}}
\label{app-proof2}

By monotonicity of $g_*$ we have
\begin{equation*}
C^*_t = c+\nu_t^*=  c+\Big[g_*\big(\inf_{0 \leq s \leq t} X^x_s\big)-c\Big]^+ \geq g_*(X_t^x)\wedge 1=g_*(X_t^x),
\end{equation*}
since $0\le g_*\le 1$. Hence $1$ follows.

To prove $2$ fix $\omega \in \Omega$ and suppose that for some $t>0$ we have $(C^*_t(\omega), X^x_t(\omega)) \in \mathcal{C}$, i.e.~$C^*_t(\omega)>g_*(X^x_t(\omega))$. We distinguish two cases.
In the case that $g_*\left(\inf_{0 \leq u \leq t} X^x_u(\omega)\right) \geq c$, we have $g_*\left(\inf_{0 \leq u \leq t} X^x_u (\omega) \right)=C^*_t(\omega)>g_*(X_t^x(\omega))$ and then by monotonicity of $g_*$ we have $\inf_{0 \leq u \leq t} X^x_u(\omega)<X_t^x(\omega)$. By continuity of $t\mapsto X_t^x(\omega)$ we deduce that $r \mapsto \inf_{0 \leq u \leq r} X^x_u(\omega)$ is constant in the interval $r \in [t,t+\epsilon(\omega))$ for some $\epsilon(\omega)>0$.
In the case that $g_*\left(\inf_{0 \leq u \leq t} X^x_u(\omega)\right) < c$, we have $c=C^*_t(\omega)>g_*(X_t^x(\omega))$ and then again by monotonicity and continuity of $g_*$, and continuity of $X_t^x(\omega)$, there exists $\epsilon(\omega)>0$ such that
$c > g_*\left(\inf_{0 \leq u \leq t+\epsilon(\omega)} X^x_u(\omega)\right)$
and so $\nu_r^*(\omega)=0$ for all $r \in [0,t+\epsilon(\omega))$.

Summarising, we have shown that if $(C^*_t(\omega), X^x_t(\omega)) \in \mathcal{C}$ then $\nu^*$ is constant in a right (stochastic) neighbourhood of $t$, establishing the second part.


\bigskip

\textbf{Acknowledgments.} This work was started when the authors were visiting the Hausdorff Research Institute for Mathematics (HIM) at the University of Bonn in the framework of the Trimester Program ``Stochastic Dynamics in Economics and Finance''. We thank HIM for the hospitality. Part of this work was carried out during a visit of the second author at the School of Mathematics of the University of Manchester. The hospitality of this institution is gratefully aknowledged. We wish also to thank G.~Peskir, F.~Riedel and M.~Zervos for many useful discussions.


\end{document}